\newtheorem{theorem}{Theorem}[section]
\newtheorem{lemma}{Lemma}[section]
\newtheorem{proposition}{Proposition}[section]
\newtheorem{remark}{Remark}[section]
\title[The evolution problem for the fractional first eigenvalue]{The evolution problem associated with the fractional first eigenvalue}
\author[B. Barrios]{Bego\~na Barrios}
	\address{Bego\~na Barrios \hfill\break\indent
		Departamento de An\'{a}lisis Matem\'{a}tico,
		Universidad de La Laguna\hfill 
		\break \indent C/. Astrof\'{\i}sico Francisco S\'{a}nchez s/n, 
		38200 -- La Laguna, SPAIN}
	\email{bbarrios@ull.es}
\author[L. M. Del Pezzo]{Leandro M. Del Pezzo}
	\address{Leandro M. Del Pezzo and Julio D. Rossi \hfill\break\indent
        Departamento  de Matem{\'a}tica, FCEyN,\hfill\break\indent
        Universidad de Buenos Aires,
        \hfill\break\indent Pabellon I,
         Ciudad Universitaria (C1428BCW),
         \hfill\break\indent 
        Buenos Aires, Argentina.}
\email{ldpezzo@dm.uba.ar, jrossi@dm.uba.ar}
\author[A. Quaas]{Alexander Quaas}
\address{A. Quaas
       \hfill\break\indent Departamento de Matem\'atica, 
        \hfill\break\indent Universidad T\'ecnica Federico Santa Mar\'ia 
        \hfill\break\indent Casilla V-110, Avda. 
        \hfill\break\indent Espa\~na, 1680 -- Valpara\'iso, CHILE.}
    \email{alexander.quaas@usm.cl}
\author[J. D. Rossi]{Julio D. Rossi}
\begin{document}

\begin{abstract} In this paper we study the evolution problem
associated with the first fractional eigenvalue. We prove that 
the Dirichlet problem with homogeneous boundary condition is
well posed for this operator in the framework of viscosity solutions
(the problem has existence and uniqueness of a solution and
a comparison principle holds). In addition, we show that solutions
decay to zero exponentially fast as $t\to \infty$ with a bound that is
given by the first eigenvalue for this problem that we also study.

\smallskip

\noindent \textbf{Keywords.} {Parabolic problems, fractional eigenvalues, viscosity solutions}

\smallskip

\noindent \textbf{2020 AMS Subj. Class.} {35K55, 35D40, 35R11} 
\end{abstract}

\maketitle




\section{Introduction}
    The main goal of this work is to study the evolution problem
	associated to the following non-linear non-local operator 
	\begin{equation}\label{operador}
	    \Lambda_1^s u(x)\coloneqq\inf_{\theta\in\mathbb{S}^{N-1}}\int_{\mathbb{R}}
        \dfrac{{u}(x+\tau \theta)-{u}(x)}{|\tau|^{1+2s}} d\tau,
    \end{equation}
	where $s\in(0,1),$ and  $\mathbb{S}^{N-1}$ is the $(N-1)-$sphere. Here and throughout the entire 
	manuscript, the integrals are understood  in the principal value sense.
	
	Notice that in the previous definition the infimum is computed among directions $\theta$ of the quantities given by the $1-$dimensional
	fractional Laplacian for $u$ restricted to the line with direction $\theta$ that passes
	through the point $x$.
	
	 Here we will consider $N>2$. Observe that, trivially, when $N=1$ the operator given 
	 in \eqref{operador} coincides with $-(-\Delta)^{s}_{1}$, the negative value of the $1-$dimensional 
	 fractional Laplacian for which the results that we obtain in the present work are known. 

    This paper is a natural continuation of \cite{DPQR_FC} where the elliptic problem
	$\Lambda_1^s u(x)=0$ inside a convex domain with a Dirichlet exterior condition
	is studied. However, we want to highlight that in the present work we establish some relevant 
	results also related with the elliptic problem like the regularity up to the boundary.
	
	\medskip

	The operator described in \eqref{operador} appears naturally when we extend the notion
	of convexity of a function to the non-local setting. Let us describe this 
	fact with more detail. 
	It is well
	known that a function $u\colon\Omega\to\mathbb{R}$ is said to be 
	convex in $\Omega$
	if for any two points $x,y\in\Omega$ such that the segment that goes from $x$ to $y$ is contained in $\Omega$, i.e.,  
	\[
		[x,y]\coloneqq\{tx+(1-t)y\colon t\in(0,1)\}\subseteq\Omega,
	\] 
	it holds that 
	\[
		u(tx+(1-t)y)\le t u(x)+(1-t) u(y), \quad t\in(0,1).
	\]
	See for instance, \cite{Vel}.
	In terms of  second order partial differential equations, a function
	$u$ is convex if and only if
	\[
		\lambda_1(D^2u)(x)\ge0,
	\]
	in the viscosity sense. Here $\lambda_1(D^2u)$ denotes the 
	smallest eigenvalue of the Hessian, $D^2u,$ that is
	\begin{equation}\label{eq_intro_lamba1}
		\lambda_1(D^2u)(x)\coloneqq\inf
		\left\{	
			\langle D^2u(x) z,z\rangle\colon z\in\mathbb{S}^{N-1}
		\right\}.
	\end{equation} 
	We refer to \cite{BlancRossi,HL1,OS,Ober}, and references therein.
	
	In \cite{DPQR_FC}, inspired by the classical notion of convexity,
	 the authors propose the following natural extension 	
	of convexity for functions in the non-local setting: a function
	$u\colon \mathbb{R}^N\to \mathbb{R}$ is said $s-$convex in $\Omega$ if
	for any $x,y\in\Omega$ such that $[x,y]\subseteq\Omega$ it holds that
	\[
		u(tx+(1-t)y)\le v(tx+(1-t)y),\quad t\in(0,1),	
	\] 
	where now $v$ is the viscosity solution of the $1-$dimensional fractional 
	Laplacian of order $s$ in the line $t \mapsto tx + (1-t)y$ for $t \in (0,1)$ with
	exterior values $u(tx+(1-t)y)$ for $t \not\in (0,1)$, that is, $v$ is the solution to 
		\begin{align*}
		&\Delta_1^sv(tx+(1-t)y)
			\coloneqq \int_{\mathbb{R}}
			\dfrac{{v}(\tau x+(1-\tau)y)-{v}(tx+(1-t)y)}{|t-\tau|^{1+2s}} d\tau=0 
			\quad t\in(0,1),\\
		&{v(tx+(1-t)y)}=u(tx+(1-t)y),\quad t\not \in(0,1).
	\end{align*}
	As usual, the integral is to be understood in the principal
	value sense. Finally, in \cite{DPQR_FC}, it is shown that $u$
	is $s-$convex in a bounded strictly convex $\mathcal{C}^{2}$ domain $\Omega$
	(from this point, $\Omega$ will be 
	a bounded domain with $\mathcal{C}^2$-boundary such that all the principal curvatures of the 
	surface $\partial\Omega$ are positive everywhere) if only if 
	it satisfies
	\[
		\Lambda_1^s u(x)\ge 0\text{ in } \Omega,
	\] 
	in the viscosity sense where $\Lambda_1^s$ is the operator given by \eqref{operador}.
	
    We notice that this operator \eqref{operador} also appears as part of a family of operators
	called the truncated fractional Laplacians that where studied in \cite{TBG} where the authors proved that $C(s)\Lambda_1^s u(x)\to \lambda_1(D^2u)(x)$ with $C(s)\to 0$ when $s\to 1^{-}$.
	For the local counterpart of truncated Laplacians we refer to \cite{BGI,BGI2,BGL}.  
	As far as we know this is the first reference dealing with the associated evolution problem. 
	
	\medskip
	
	Coming back to the main objective of the work, we will study the following problem	
	\[
		\begin{cases}
			u_t(x,t) =\Lambda_1^s u(x,t) &\text{in }\Omega\times(0,\infty),\\
			u(x,t)=g(x) &\text{in }
				(\mathbb{R}^N\setminus\Omega)\times (0,\infty),\\
			u(x,0)=u_0(x) &\text{in }{ \mathbb{R}^N},
		\end{cases}
	\]
	where $\Omega$ is a bounded and strictly convex domain, 
	$g$ is a function defined in $\mathbb{R}^N\setminus\Omega$ and $u_0$ is a fixed initial condition 
	defined in $\mathbb{R}^N$. 
	
	Since the operator \eqref{operador} is not in divergence form 
	the natural framework to treat this problem is to use the theory 
	of viscosity solutions (see Section \ref{sect-evolution} for more details). For this notion of solutions 
	we refer to the celebrated work of \cite{CIL} for the classical local concept and, for instance, to 
	\cite{CS, RO} for a non-local counterpart. It is good to bear in mind that, in order to guarantee the 
	attainability of the datum in the notion of the viscosity solution, we have to impose some reasonable 
	conditions. Indeed in what follows we will assume that 
	\begin{equation}\label{hip}	
			\Lambda_1^su_0(x)\in L^{\infty}(\Omega)  ,\, 
			g\in \mathcal{C}({\mathbb{R}^N\setminus\Omega})\cap 
		L^\infty({\mathbb{R} ^N\setminus\Omega})
		\mbox{ and } 
			u_0=g \mbox{ in }	\mathbb{R}^N\setminus\Omega.
	\end{equation}
    
    \medskip
	
	Our first result, presented in the following theorem, 
	that says  the evolution problem is well posed, that is the problem has existence and uniqueness of a 
	solution and a comparison principle holds. 
	
	\begin{theorem} \label{teo-1.intro} 
	If  $\Omega$ is a bounded strictly convex domain,
	$u_0{\in\mathcal{C}(\mathbb{R}^N)}$ and $g$ verifying \eqref{hip}, then there exists a unique viscosity solution to 
	\begin{equation}\label{ev_eq}
		\begin{cases}
			u_t(x,t)=\Lambda_1^s u(x,t) &\text{in }\Omega\times(0,\infty),\\
			u(x,t)=g(x) &\text{in }
				(\mathbb{R}^N\setminus\Omega)\times(0,\infty),\\
			u(x,0)=u_0(x) &\text{in } \mathbb{R}^N. 
		\end{cases}
	\end{equation}
	Moreover, a comparison principle holds; that is, if $\overline{u}$ is a supersolution 
	 and $\underline{u}$ is a subsolution to \eqref{ev_eq} then it holds that
	 $$\overline{u} (x,t) \geq \underline{u} (x,t),$$ for every $(x,t) \in \Omega \times (0,\infty)$. 	
	\end{theorem}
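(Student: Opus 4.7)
The plan is to prove the comparison principle first (which immediately yields uniqueness) and then construct the solution by Perron's method, using barriers built from the strict convexity of $\Omega$ and the regularity hypotheses \eqref{hip}.

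For the comparison principle, fix $T>0$ and argue by contradiction: assume $M:=\sup_{\Omega\times(0,T)}(\underline{u}-\overline{u})>0$. Since $\underline{u}\le\overline{u}$ on the parabolic boundary, $M$ is attained in the interior. Use the doubling variables technique
\[
\Phi(x,y,t,s):=\underline{u}(x,t)-\overline{u}(y,s)-\frac{|x-y|^2}{2\varepsilon}-\frac{(t-s)^2}{2\varepsilon}-\frac{\eta}{T-t},
\]
and call $(\hat x,\hat y,\hat t,\hat s)$ its maximizer. Standard estimates give $\hat x,\hat y\in\Omega$ and $\hat t,\hat s>0$ for $\varepsilon,\eta$ small, and matching the parabolic parts leads to
\[
\frac{\eta}{(T-\hat t)^2}\le \Lambda_1^s\underline{u}(\hat x,\hat t)-\Lambda_1^s\overline{u}(\hat y,\hat s).
\]
The structural point that bypasses the full nonlocal Crandall--Ishii lemma is that the penalty $|x-y|^2/(2\varepsilon)$ depends only on $x-y$: translating both arguments by the same vector $\tau\theta$ leaves it unchanged, so the maximality of $\Phi$ at $(\hat x,\hat y,\hat t,\hat s)$ yields
\[
\underline{u}(\hat x+\tau\theta,\hat t)-\underline{u}(\hat x,\hat t)\le \overline{u}(\hat y+\tau\theta,\hat s)-\overline{u}(\hat y,\hat s)
\]
for every $\theta\in\mathbb{S}^{N-1}$ and every $\tau\in\mathbb{R}$. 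Dividing by $|\tau|^{1+2s}$, integrating in principal value sense along each fixed line, and finally taking the infimum in $\theta$ gives $\Lambda_1^s\underline{u}(\hat x,\hat t)\le \Lambda_1^s\overline{u}(\hat y,\hat s)$, which contradicts the previous display. Hence $\underline{u}\le\overline{u}$ in $\Omega\times(0,T)$, and letting $T\to\infty$ finishes comparison (and therefore uniqueness).

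For existence, apply Perron's method. The hypothesis $\Lambda_1^s u_0\in L^\infty(\Omega)$ together with $u_0=g$ in $\mathbb{R}^N\setminus\Omega$ allows the use of $u_0(x)\pm Ct$, with $C=\|\Lambda_1^s u_0\|_{L^\infty(\Omega)}$, as global sub- and supersolutions matching both the initial and the exterior data. The Perron envelope
\[
u(x,t):=\sup\{v(x,t):v\text{ is a subsolution of \eqref{ev_eq} with } v\le u_0+Ct\}
\]
is a discontinuous solution by the standard closedness-under-suprema and bump arguments for nonlocal viscosity equations. Its continuity up to the parabolic boundary is obtained from the sub/supersolution sandwich at $t=0$ and from spatial barriers at each boundary point $x_0\in\partial\Omega$: the strict convexity of $\Omega$ provides a direction $\theta_0$ along which the line $x_0+\mathbb{R}\theta_0$ exits $\Omega$ immediately, and since $\Lambda_1^s$ is the \emph{infimum} over directions, controlling this single direction suffices to produce a barrier. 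These are precisely the boundary barriers already developed in \cite{DPQR_FC} for the elliptic problem, and they can be shifted in time as $u_0(x_0)\pm C t\pm$(spatial barrier) to give both upper and lower bounds at $\partial\Omega\times[0,\infty)$.

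The main obstacle is this last barrier step: unlike in the local case, the nonlocality couples the equation in $\Omega$ to $g$ throughout $\mathbb{R}^N\setminus\Omega$, so attainment of the boundary datum cannot be treated pointwise. What makes the argument go through is the combination of two facts used crucially above, namely the infimum structure of $\Lambda_1^s$ (which reduces barrier construction to choosing one good direction) and the strict convexity of $\partial\Omega$ (which guarantees the existence of such a direction at every boundary point); comparison then transfers these one-sided barriers to the Perron envelope and yields continuity down to $\partial_p(\Omega\times(0,\infty))$.
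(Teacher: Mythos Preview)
The proposal has the right architecture (comparison, then Perron) but the comparison argument contains a genuine gap. Your translation-invariance step---deducing
\[
\underline{u}(\hat x+\tau\theta,\hat t)-\underline{u}(\hat x,\hat t)\le \overline{u}(\hat y+\tau\theta,\hat s)-\overline{u}(\hat y,\hat s)\quad\text{for every }\tau\in\mathbb{R}
\]
from the maximality of $\Phi$---requires $(\hat x,\hat y,\hat t,\hat s)$ to be a \emph{global} maximizer of $\Phi$ over $\mathbb{R}^N\times\mathbb{R}^N\times[0,T)^2$. Under the standing hypothesis $g\in\mathcal{C}(\mathbb{R}^N\setminus\Omega)\cap L^\infty$ (no uniform continuity, no decay) such a maximizer need not exist: nothing prevents a maximizing sequence with $|x_n-y_n|$ small from drifting to infinity with $g(x_n)-g(y_n)$ bounded away from zero. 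If instead you maximize over $\overline\Omega\times\overline\Omega$ (where compactness is free), the displacement inequality is only available while both $\hat x+\tau\theta$ and $\hat y+\tau\theta$ remain in $\overline\Omega$, which is not enough to control the full line integral defining $\Lambda_1^s$.

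The paper confronts exactly this point. It maximizes over $\overline\Omega^2$, then splits each directional integral into a near part (handled by the quadratic test function, of order $\alpha\delta^{2-2s}$), an interior part over $\{\tau:\hat x+\tau\theta\in\Omega\}$ (controlled by the translation inequality, valid there), and an exterior part. For the exterior piece it uses $\underline u\le g\le\overline u$ in $\mathbb{R}^N\setminus\Omega$ together with $\hat x,\hat y\to\tilde x$ as $\alpha\to\infty$; the strict convexity of $\Omega$ is used precisely to make the indicator functions of the two segments converge a.e., and the exterior contribution tends to
\[
-\widetilde M\int_{\{\tau:\tilde x+\tau z_0\notin\Omega\}}\frac{d\tau}{|\tau|^{1+2s}}<0,
\]
which is where the contradiction actually comes from. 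Your argument can be repaired (e.g.\ by adding a localizing term $-\beta(|x|^2+|y|^2)$ and sending $\beta\to 0$, or by working in $B_R$ and absorbing the $O(R^{-2s})$ tail), but as written the key inequality is unjustified.

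For existence, your barriers $u_0\pm Ct$ are correct and give continuity at $t=0$. The paper takes a cleaner route for the lateral boundary: it solves the elliptic problems $\Lambda_1^s\overline w=-\|\Lambda_1^s u_0\|_\infty$ and $\Lambda_1^s\underline w=\|\Lambda_1^s u_0\|_\infty$ in $\Omega$ with exterior datum $g$, and uses the \emph{time-independent} functions $\overline w,\underline w$ as global super/subsolutions. Since these are already known from \cite{DPQR_FC} to be continuous up to $\partial\Omega$ with boundary value $g$, attainment of the data for the Perron envelope follows at once, with no need to assemble ad hoc spatial barriers at each boundary point.
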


	Our next objective is to study the asymptotic behavior of the solution as $t\to \infty$
	in the special case $g \equiv 0$. 
	When we deal with the homogeneous case, $g \equiv 0$, we can obtain an 
	exponential bound for the rate of convergence to zero. 
	In parabolic problems the exponential decay rate
	is associated with the first eigenvalue of the operator.
	In our case this eigenvalue problem reads as
		\[
			\begin{cases}
				-\Lambda_1^s \varphi_1(x)=\mu_{1}\varphi_1 
				&\text{in }\Omega,\quad\\
			 \varphi_1=0&\text{in } \mathbb{R}^{N}\setminus\Omega.
				\end{cases}
		\]
    At this point we mention that this eigenvalue problem and a regularity issue associated to the 
    operator \eqref{operador} were also studied in the recent work \cite{biri} but we include the result 
    here because our proof is different. To prove the existence of the eigenfunctions we need a regularity 
    result that we consider interesting in its own right. 
    For that, we need the extra assumption $s>\nicefrac12$ related to the necessity of the positivity of the exponent of the
    fundamental solution of the Fractional Laplacian in one-dimension. That is we have the following
    \begin{theorem}\label{regularidad.intro} 
	    Let $u$ be a viscosity solution of 
        $$
            \begin{cases}
		        -\Lambda_1^s u(x)=f(x) & \mbox{in}\quad \Omega,\\
		        u(x)=g(x) & \mbox{in}\quad \mathbb{R}^N\setminus\Omega.
		        \end{cases}
        $$
        Assume that  $s>\nicefrac12,$ $f$ is a bounded function and $g$ 
        satisfies a H\"older bound, so that there exist $M_g$ 
        and $\beta\in(s,2s)$    such that
        \begin{equation}\label{holder-g}
                |g(x)-g(y))| \leq M_g |x-y|^{\beta},\, x,\, y
                \in \mathbb{R}^N\setminus\Omega.
        \end{equation}
        Then,  there exists 
        $C=C(\|f\|_{L^\infty(\Omega)},M_g,\|u\|_{L^\infty(\Omega)},    \Omega)>0$ 
        such that
     	\[
     			\|u\|_{\mathcal{C}^\gamma({\bar\Omega)}}\leq C,
 		\]
 		where 
 		\[
 		    \gamma\in 
 		    \begin{cases}
	            (0,2s-1) &\text{ if } g\equiv0,\\
	            (0,\beta-s)&\text{ if } g\neq0.\\
        \end{cases}
 		\]
    \end{theorem}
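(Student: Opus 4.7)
The plan is to split the argument into a boundary H\"older estimate and an interior estimate, and then combine them. An $L^\infty$ bound on $u$ comes first and is standard: one compares $u$ against constants perturbed by multiples of $d(x)^{2s}$ (where $d=\mathrm{dist}(\cdot,\partial\Omega)$), using a direct computation of $\Lambda_1^s(d^{2s})$ that is made manageable by the boundedness and strict convexity of $\Omega$.

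For the boundary estimate I aim to show $|u(x)-g(x_0)|\le C|x-x_0|^\gamma$ for every $x_0\in\partial\Omega$ and every $x\in\bar\Omega$ by sandwiching $u$ between barriers $w^\pm$ with $w^\pm(x_0)=g(x_0)$ and H\"older growth of exponent $\gamma$. A natural ansatz for $w^+$ is of the form $g(x_0)+K d(x)^\gamma$ inside $\Omega$, extended outside $\Omega$ to dominate $g$ (using \eqref{holder-g}). To verify that $w^+$ is a supersolution it suffices to estimate $\Lambda_1^s w^+(x)$ for $x$ near $x_0$. The strict convexity of $\Omega$ (positive principal curvatures of $\partial\Omega$) furnishes a uniform interior ball condition, so that for every $\theta\in\mathbb{S}^{N-1}$ the line $\{x+\tau\theta\}$ exits $\Omega$ within bounded length. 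Consequently the exterior contribution to $\Delta_1^s w^+$ in each direction is controlled by $K$ and the H\"older modulus of $g$, while the infimum over $\theta$ is dominated by the direction aligned with $x-x_0$, producing a one-sided bound of the form $\Lambda_1^s w^+(x)\le -cK|x-x_0|^{\gamma-2s}$. The restrictions $s>1/2$ and $\gamma<2s-1$ enter precisely to guarantee integrability and the correct sign of the relevant model kernel integrals. The comparison principle then gives $u\le w^+$; the lower barrier is analogous.

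For the interior propagation I would use an Ishii--Lions doubling-of-variables argument. Assume for contradiction that
\[
M_L:=\sup_{x,y\in\bar\Omega}\bigl(u(x)-u(y)-L|x-y|^\gamma\bigr)>0
\]
for every $L>0$; the previous step bounds $M_L$ uniformly in $L$ whenever the maximizer lies on $\partial(\bar\Omega\times\bar\Omega)$, so one may assume an interior maximizer $(\hat x,\hat y)$ with $\hat x\ne\hat y$. Testing the subsolution inequality at $\hat x$ against the smooth test function $\phi(x)=u(\hat y)+L|x-\hat y|^\gamma+M_L$ and taking $\theta=(\hat x-\hat y)/|\hat x-\hat y|$ in the infimum defining $\Lambda_1^s$ yields $-\Lambda_1^s\phi(\hat x)\ge cL|\hat x-\hat y|^{\gamma-2s}$, which contradicts $-\Lambda_1^s u(\hat x)\le \|f\|_{L^\infty}$ once $L$ is large.

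The main obstacle is certainly the barrier construction: verifying $-\Lambda_1^s w^+\ge f$ \emph{uniformly in} $\theta\in\mathbb{S}^{N-1}$. Directions nearly tangent to $\partial\Omega$ at $x_0$ are the delicate ones, since the 1-D slice then remains almost entirely inside $\Omega$, making the favorable contribution of $g$ from the exterior weak and forcing a delicate balance against the growth of the interior piece. This is exactly where the strict convexity of $\Omega$ and the threshold $\gamma<2s-1$ play their roles, controlling respectively the geometry of the tangential slices and the integrability of the model kernel.
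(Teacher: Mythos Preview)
Your overall two–step architecture—boundary barrier plus interior Ishii--Lions doubling—is exactly the paper's strategy, and your interior step matches it closely (testing against $\phi(x)=u(y_0)+L|x-y_0|^\gamma$ and exploiting one radial direction, which is the paper's Lemma~\ref{barrera_1}).

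The boundary step, however, is not symmetric in the way you suggest. Because $\Lambda_1^s=\inf_\theta\mathcal I_\theta$, a \emph{supersolution} only needs one favorable direction, whereas a \emph{subsolution} requires control of $\mathcal I_\theta$ for \emph{all} $\theta$. Your one–direction argument for $w^+$ is therefore fine, but the lower barrier is \emph{not} ``analogous'': it is precisely the hard, all–$\theta$ case you flag at the end (you attach the difficulty to $w^+$, but it belongs to $w^-$). The paper in fact only proves the one–sided boundary estimate $u(x)\ge g(z)-C\,\mathrm{dist}(x,\partial\Omega)^\gamma$ (Lemma~\ref{cota11}, a subsolution barrier); the opposite inequality then comes for free from the interior doubling by swapping $x_0$ and $y_0$.

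For this all–directions estimate the paper does not work with $d(x)^\gamma$ on $\Omega$ directly. Strict convexity is used to produce an \emph{enclosing} (not interior) ball at each boundary point, $\Omega=\bigcap_{z\in\partial\Omega}B_R(z-R\nu(z))$, and the barrier is the ball profile $g_{z,M}(x)=-M(R^2-|x-y_z|^2)_+^\gamma$. On the unit ball one computes explicitly (Lemma~\ref{barrera_2}) that $\mathcal I_\theta\bigl((1-|x|^2)_+^\gamma\bigr)\le -c\,(1-|x|^2)^{\gamma-2s}$ for \emph{every} $\theta$ when $0<\gamma<2s-1$; this is where tangential directions are handled and where the exponent threshold genuinely enters. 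For $g\not\equiv0$ the paper corrects with $-M_g|x-z|^\gamma$ and absorbs it via the extremal inequality $-\Lambda_1^s v_1\le -\Lambda_1^s v_2+\Lambda_N^s(v_2-v_1)$ together with a bound on $\Lambda_N^s(|x-z|^\gamma)$ (Lemma~\ref{cota1+}). Your sketch would need an analogue of Lemma~\ref{barrera_2} for the distance function on a general strictly convex domain, which is substantially harder than the explicit ball computation.
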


 Then, we obtain the following result
for the asymptotic behavior of solutions to our evolution equation (first we show exponential convergence to the steady state and, in the
special case of $g \equiv 0$ and $u_0$ compactly supported inside
$\Omega$ we obtain more precise bounds that imply that the exponential rate is given by the first eigenvalue).

\begin{theorem} \label{teo.conver.intro} Let $s>\nicefrac12$.
If $u_0$ and $g$ satisfy \eqref{hip} 
		then there exist $C(u_0)>0$ and $\mu>0$ such that 
		\begin{equation}\label{c_uniforme}
			 \|  u(\cdot,t)-z(\cdot) \|_{L^\infty (\overline\Omega)} 
			\leq C e^{-\mu t},\quad t>0,
		\end{equation}
		where $u(x,t)$ is the unique viscosity solution of \eqref{ev_eq} 
		and $z$  {is the $s$-convex evelope, that is,} the unique viscosity solution of 
		\[
			\begin{cases}
				-\Lambda_1^s z(x)=0&\text{in }\Omega,\\
				z=g&\text{in } \mathbb{R}^{N}\setminus\Omega.
			\end{cases}
		\]
		
		When $g \equiv 0$ and $u_0$ is compactly supported inside $\Omega$ 
		we have that, {\color{black} for every $\mu > 0$ there exist $C{>0}$, $K=K(\mu)$
		and $H{>0}$ an universal constant such that
		\[
		- C e^{-\mu_1 t}	\leq u(x,t)\leq  K e^{-\mu^s Ht},\quad x\in\overline{\Omega},\quad t>0.
		\]
		with $u(x,t)$ the unique viscosity solution of \eqref{ev_eq} given 
		by Theorem \ref{teo-1.intro}. 
		Here $\mu_1$ denotes the first eigenvalue
		of the operator. 
}		
\end{theorem}


For similar results established in the local case for the evolution problem associated with 
	\eqref{eq_intro_lamba1} we refer to \cite{BlancRossi}. 
	
		 \begin{remark}
		{\rm Some of the results obtained 
		in the present work could be extended to other kind of operators like 
		\[
	        \Lambda_j^s u(x)\coloneqq \sup_{S } 
	        \inf_{\theta\in S \cap \mathbb{S}^{N-1}}\int_{\mathbb{R}}
            \dfrac{{u}(x+\tau \theta)-{u}(x)}{|\tau|^{1+2s}} d\tau.
            \]
            Here $S$ is a subspace of dimension $N-j+1$ in $\mathbb{R}^{N}$.
            This non trivial extension could be obtained as soon as one proves a comparison
            principle for this kind of operators. }
	\end{remark}

	\medskip
	
	The paper is organized as follows: in Section \ref{sect-regul}
	we deal with the regularity result and prove Theorem \ref{regularidad.intro}; 
	in Section \ref{sect-eigen} we prove the existence of the first eigenvalue (with
	a corresponding eigenfunction); finally, in Section \ref{sect-evolution} 
	is devoted to the study of
	the evolution problem, that is, we prove Theorem \ref{teo-1.intro} and 
	Theorem \ref{teo.conver.intro}. 
	
	\medskip
	
	We remark here that along the whole work we will denote by $C$ a positive constant that 
	may change from line to line and that we use $B_R(x)$ to denote the ball of radius 
	$R>0$ centered at $x\in\mathbb{R}^{N}.$

\section{Regularity, up to the boundary, for the elliptic problem} \label{sect-regul}
    We will assume that $s>\nicefrac12$ in the whole section and the main 
    objective will be the study of the H\"older regularity, up to the boundary, of a viscosity solution to
    \begin{equation}\label{eq:regularidad}
        \begin{cases}
		-\Lambda_1^s u(x)=f(x) & \mbox{in}\quad \Omega,\, s>\nicefrac12\\
		u(x)=g(x) & \mbox{in}\quad \mathbb{R}^N\setminus\Omega,
		\end{cases}
    \end{equation}
	where $f$ is a bounded function and $g\in\mathcal{C}^{\beta}$ for some $s<\beta<2s$. 
	First of all, it is worth to bear in mind that the notion of viscosity solution that we will use for the 
	elliptic problem can be found in \cite{DPQR_FC}. We start by introducing some notation. For every 
	$x\in B_1(0)$ and $\theta\in \mathbb{S}^{N-1},$ we
    define
    \[
	  d(x)\coloneqq (1-|x|^2)_+, \quad \mu(x,\theta)\coloneqq\frac{\langle x,\theta \rangle}{|x|}
	  \in[-1,1],
    \]
    and
    \[
         p_{x,\theta}(t)\coloneqq t^2+2\mu(x,\theta)|x|t-d(x).
    \]
    We denote by $t_0(x,\theta)$ and $t_1(x,\theta)$ the two roots of the $p_{x,\theta}(t),$ that is,
    \begin{align*}
	&t_0(x,\theta)\coloneqq-\langle x,\theta \rangle-\sqrt{d(x)+\langle x,\theta \rangle^2}< 0,\\
	&t_1(x,\theta)\coloneqq-\langle x,\theta \rangle+\sqrt{d(x)+\langle x,\theta \rangle^2}>0,
    \end{align*}
    that satisfy
    \begin{equation}\label{tt01}
        t_0(x,\theta)t_1(x,\theta) =-d(x).
    \end{equation}
    
    Let us define our first barrier by	
    \[
	w_\gamma(x)\coloneqq d(x)^\gamma, \quad \gamma\in\mathbb{R},
    \]
    and the integral
    \[
        \mathcal{I}_{\theta} (f)(x)\coloneqq\int_{\mathbb{R}}
			\dfrac{f(x+\tau \theta)-f(x)}{|\tau|^{1+2s}} d\tau,\quad \theta\in\mathbb{S}^{N-1}.    
    \]
    
    Taking into account the previous notations, we can formulate our first lemma.
    
    \begin{lemma} \label{barrera_2}
	If $0<\gamma<2s-1$ then there exits $c_1>0$ such that
	\[
	    \mathcal{I}_{\theta} (w_\gamma)(x)\leq w_\gamma(x)  \left[
	    	-\frac{t_1(x,\theta)^{-2s} + |t_0(x,\theta)|^{-2s}}{2s}
	    -c_1\left(\frac{2 {|\mu(x,\theta)|}|x|}{d(x)}\right)^{2s}\right],
	\]
	for all $ x\in B_1(0)$ and $\theta\in \mathbb{S}^{N-1}.$ In particular, 
	there exists $c_2>0$ such that
	\[
	    \mathcal{I}_{\theta}(w_\gamma(x))\leq -c_2 d(x)^{\gamma- {s}},
	    \quad x\in B_1(0),\, \theta\in \mathbb{S}^{N-1}.\]
    \end{lemma}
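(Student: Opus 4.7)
The plan is to reduce $\mathcal{I}_\theta(w_\gamma)(x)$ to a one-dimensional integral by exploiting the explicit form of $w_\gamma$ along the line $\tau\mapsto x+\tau\theta$, and then to split this integral into an ``exterior'' part, which produces the first negative contribution, and an ``interior'' part, which carries the $\mu$-dependent negative term.

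For $\tau\in(t_0,t_1)$ we have $d(x+\tau\theta)=-p_{x,\theta}(\tau)=(t_1-\tau)(\tau-t_0)$ and $d(x+\tau\theta)=0$ outside, so $w_\gamma(x+\tau\theta)=(-p_{x,\theta}(\tau))_+^\gamma$ and $w_\gamma(x)=(t_1|t_0|)^\gamma=d(x)^\gamma$ by \eqref{tt01}. Splitting
\[
\mathcal{I}_\theta(w_\gamma)(x)=\underbrace{\int_{\mathbb{R}\setminus(t_0,t_1)}\frac{-d(x)^\gamma}{|\tau|^{1+2s}}\,d\tau}_{J_{\mathrm{ext}}}+\underbrace{\int_{t_0}^{t_1}\frac{(-p_{x,\theta}(\tau))^\gamma-d(x)^\gamma}{|\tau|^{1+2s}}\,d\tau}_{J_{\mathrm{int}}},
\]
a direct evaluation yields $J_{\mathrm{ext}}=-d(x)^\gamma(t_1^{-2s}+|t_0|^{-2s})/(2s)$, exactly the first bracket of the target bound.

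For $J_{\mathrm{int}}$ I would use the symmetry $\mathcal{I}_\theta=\mathcal{I}_{-\theta}$ to assume $\mu(x,\theta)\ge 0$ and then make the affine change of variables $\tau=a\sigma-\langle x,\theta\rangle$ with $a=\sqrt{d(x)+\langle x,\theta\rangle^2}$, which sends $(t_0,t_1)$ to $(-1,1)$. Using $d(x)=a^2(1-R^2)$ and $|\tau|=a|\sigma-R|$ with $R=\langle x,\theta\rangle/a\in[0,1)$, this gives
\[
J_{\mathrm{int}}=a^{2\gamma-2s}F(R),\qquad F(R):=\int_{-1}^{1}\frac{(1-\sigma^2)^\gamma-(1-R^2)^\gamma}{|\sigma-R|^{1+2s}}\,d\sigma.
\]
Since $\mu|x|=aR$ and $d(x)^{\gamma-2s}=a^{2\gamma-4s}(1-R^2)^{\gamma-2s}$, the target estimate on $J_{\mathrm{int}}$ reduces to the one-dimensional inequality
\[
F(R)\le -c\,R^{2s}(1-R^2)^{\gamma-2s},\qquad R\in[0,1).
\]

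I would prove this inequality by splitting in $R$. For $R$ in any compact subset of $[0,1)$, the integral $F(R)$ is well-defined in the principal-value sense (the linear Taylor term in $(\sigma-R)$ cancels, and the remainder is integrable since $s<1$); an inspection shows $F(0)<0$, and continuity of $F$ then provides a uniform negative upper bound that dominates the bounded right-hand side. For $R\to 1^-$ the region $\sigma\in(R,1)$ produces the dominant negative contribution: there $|\sigma-R|\le 1-R$ while $(1-\sigma^2)^\gamma$ is small, which leads to $F(R)\lesssim-(1-R)^{\gamma-2s}\asymp-(1-R^2)^{\gamma-2s}$, and together with $R^{2s}\to 1$ this gives the claim. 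Combining $J_{\mathrm{ext}}$ and $J_{\mathrm{int}}$ yields the first inequality, and the ``in particular'' follows by choosing $\theta=x/|x|$ for $x\ne0$, where $a=1$ and $t_1=1-|x|\asymp d(x)/2$ force $J_{\mathrm{ext}}$ alone to produce the bound $-c_2 d(x)^{\gamma-2s}$ (for $x$ near the origin, $d(x)$ is bounded below and the estimate is trivial). The main obstacle will be the quantitative control of $F(R)$ as $R\to 1^-$: one must extract precisely the blow-up rate $(1-R^2)^{\gamma-2s}$ while handling the cancellations intrinsic to the one-dimensional singular integral.
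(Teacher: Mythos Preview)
Your decomposition into $J_{\mathrm{ext}}+J_{\mathrm{int}}$ and the evaluation of $J_{\mathrm{ext}}$ coincide with the paper. The handling of $J_{\mathrm{int}}$, however, is different: the paper does not pass to a function $F(R)$ and analyse its asymptotics. Instead it uses the pointwise majorisation $(d(x)-2\mu|x|\tau-\tau^2)^\gamma\le(d(x)-2\mu|x|\tau)^\gamma$ (simply discarding the $-\tau^2$), substitutes $\upsilon=\tfrac{2\mu|x|}{d(x)}\tau$ and then $e^\rho=1-\upsilon$, and lands on an integral over a symmetric interval $[-L,L]$ whose sign is read off directly via an odd/even decomposition (the computation from \cite{BCGJ}). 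No limit $R\to1^-$ has to be studied; the case $\mu=0$ is treated separately by a one-line scaling.

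There are two genuine gaps in your sketch. First, ``$F(0)<0$ and continuity'' does not yield a uniform negative upper bound for $F$ on a compact subinterval of $[0,1)$ that reaches points away from $0$: continuity only propagates negativity to a neighbourhood of $R=0$, and you have given no argument that $F(R)<0$ for \emph{every} $R\in(0,1)$. That is exactly the heart of the matter, and it is what the paper's majorisation plus $e^\rho$-substitution is designed to deliver without any case analysis. Second, your derivation of the ``in particular'' fixes the single direction $\theta=x/|x|$, whereas the conclusion is stated for all $\theta\in\mathbb{S}^{N-1}$. Choosing one favourable direction cannot suffice: for instance when $\theta\perp x$ one has $\mu=0$, the $c_1$-term disappears, $t_1=|t_0|=\sqrt{d(x)}$, and $J_{\mathrm{ext}}$ alone gives only a bound of order $-d(x)^{\gamma-s}$, so the shortcut ``$J_{\mathrm{ext}}$ already gives $-c_2 d(x)^{\gamma-2s}$'' fails uniformly in $\theta$.
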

    \begin{proof}
        Let $ x\in B_1(0)$ and $\theta\in \mathbb{S}^{N-1}.$
        To simplify the notation, let us now temporarily write  $w=w_\gamma,$
	    $\mu=\mu(x,\theta),$ and $t_i=t_i(x,\theta)$ with $i=0,1.$ First of all we notice that
	\begin{equation}
        \label{naux1}
        \begin{aligned}
	        \mathcal{I}_{\theta} (w)(x)&=\int_{\mathbb{R}}
	        \dfrac{(1-|x+\tau\theta|^2)^\gamma_+-(1-|x|^2)^\gamma}{|\tau|^{1+2s}}d\tau\\
		    &=\int_{\mathbb{R}}
		    \dfrac{(1-|x|^2-2\mu |x|\tau-\tau^2)^\gamma_+-(1-|x|^2)^\gamma}{|\tau|^{1+2s}} 
		    d\tau\\
		    &=\int_{\mathbb{R}\setminus(t_0,t_1)}
		    \dfrac{(1-|x|^2-2\mu |x|\tau-\tau^2)^\gamma_+-(1-|x|^2)^\gamma}{|\tau|^{1+2s}} 
		    d\tau+ J,
	    \end{aligned}
    \end{equation}
	where
	\[
	    J\coloneqq\int_{t_0}^{t_1}
			\dfrac{(1-|x|^2-2\mu |x|\tau-\tau^2)^\gamma-(1-|x|^2)^\gamma}{|\tau|^{1+2s}} d\tau.
	\]
	Since by definition of $t_i$, $i=0,1$, it is clear that
	\[
	    1-|x|^2-2\mu |x|\tau-\tau^2< 0, \quad\tau\in (-\infty,t_0)\cup(t_1,\infty),\, 
	     {\mu\in[-1,1]},
	\]
    from \eqref{naux1} we get
	\[
	    \mathcal{I}_{\theta} (w)(x)\le -\frac{w(x)}{2s}\left( t_1^{-2s}  + |t_0|^{-2s}\right)+J.
	\]
	So, to conclude, we just need to estimate $J$. For that, by symmetry, we split the rest of the proof in 
	the two cases $\mu>0$ and $\mu=0.$
    \medskip

	\noindent {\it Case 1: $\mu>0.$} Since it is clear that
	\[
	    J\leq\int_{t_0}^{t_1}
				\dfrac{(1-|x|^2-2\mu |x|\tau)^\gamma-(1-|x|^2)^\gamma}{|\tau|^{1+2s}} d\tau
				= w(x) \int_{t_0}^{t_1}
				\dfrac{\left(1-\tfrac{2\mu |x|}{d(x)}\tau\right)^\gamma-1}{|\tau|^{1+2s}} d\tau,
	\]
	by making the change of variable $\upsilon=\frac{2\mu |x|}{d(x)}\tau$ we get
	\[
	    J\leq w(x) \left(\frac{2\mu|x|}{d(x)}\right)^{2s}
			\int_{\upsilon_0}^{\upsilon_1}\frac{(1-\upsilon)^\gamma-1}{|\upsilon|^{1+2s}}
			d\upsilon=:w(x) \left(\frac{2\mu|x|}{d(x)}\right)^{2s}J_1,
	\]
	where $\upsilon_i=\tfrac{2\mu |x|}{d(x)} t_i=1-\tfrac{t^2_i}{d(x)}$, $i=0,1.$ Proceeding now as 
	in \cite[Lemma B.1]{BCGJ}, by making the change of variable  $e^\rho =1-\upsilon$, only in  
	$\rho \in [\rho_1(x), \rho_0(x)]$ (not in all the Euclidean space as in \cite{BCGJ}) where
	$\rho_i(x)=\ln \left(\tfrac{t_i^2}{d(x)}\right)$, $i=0,1$, we obtain that 
	\[
	    J_1=2^{-2s}\int_{\rho_1}^{\rho_0}e^{(\gamma+1-2s)\frac{\rho}2}
				\sinh\left(\gamma \frac{\rho}2\right)
				\left|\sinh\left(\frac\rho2\right)\right|^{-(1+2s)}d\rho
				\begin{cases}
					<0 &\text{if } \gamma<2s-1,\\
					=0 &\text{if } \gamma=2s-1.\\
				\end{cases}
	\]
	Observe that, by \eqref{tt01}, in the previous estimate, we have used that 
	$|\rho_0(x)|=|\rho_1(x)|$ for all $x\in B_1(0)$. Thus the conclusion follows in the case $\mu>0$.

		\medskip

		\noindent {\it Case 2: $\mu=0.$} As now $t_0=-\sqrt{d(x)},$ $t_1=\sqrt{d(x)}$, by symmetry of the integral it is follows that
		$$
		\begin{aligned}
		J&=2 \int_{0}^{\sqrt{d(x)}}\dfrac{(1-|x|^2-\tau^2)_{+}^\gamma
		-(1-|x|^2)^\gamma}{|\tau|^{1+2s}}d\tau\\
    &=2w(x)d(x)^\gamma
\int_{0}^{\sqrt{d(x)}}\frac{\left(1-\frac{\tau^2}{d(x)}\right)^{\gamma}-1}{|\tau|^{1+2s}}\, d\tau\\
&=\frac{w(x)}{d(x)^{s-\gamma}}\int_{0}^{1}\frac{(1-z)^{\gamma}-1}{|z|^{1+s}}\, dz<-C\frac{w(x)}{d(x)^{s-\gamma}},
\end{aligned}
$$
for some $C>0$ as wanted.
	\end{proof}

	Let us define the second barrier
		\[
			v_\gamma(x):=|x|^\gamma,  \quad \gamma\in\mathbb{R},
		\]
for which we have the next results.
	\begin{lemma}\label{barrera_1}
		For every
		$0<\gamma<2s-1$ there exists
		 $C>0$ such that
		\[
			\mathcal{I}_{\hat{x}} (v_\gamma)(x)\le -C|x|^{\gamma-2s},\, 
			x\in \mathbb{R}^{N}\setminus\{0\},
		\]
		where $\hat{x}=\tfrac{x}{|x|}\in \mathbb{S}^{N-1}.$
	\end{lemma}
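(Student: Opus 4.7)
My plan is to exploit the scaling and radial structure to reduce the statement to the sign of a single one-variable constant, which I will then verify by the logarithmic-substitution strategy used in the proof of Lemma \ref{barrera_2} (and \cite[Lemma B.1]{BCGJ}). The first step is geometric: since $\hat{x}=x/|x|$ is radial, the line $\{x+\tau\hat{x}:\tau\in\mathbb{R}\}$ passes through the origin, so $x+\tau\hat{x}=(|x|+\tau)\hat{x}$ and $v_\gamma(x+\tau\hat{x})=||x|+\tau|^{\gamma}$. Hence $\mathcal{I}_{\hat{x}}(v_\gamma)(x)$ is a genuine one-dimensional integral, and the change of variable $\tau=|x|\sigma$ decouples the $|x|$-dependence to give
\begin{equation*}
\mathcal{I}_{\hat{x}}(v_\gamma)(x)=|x|^{\gamma-2s}\,A(\gamma),\qquad A(\gamma):=\int_\mathbb{R}\frac{|1+\sigma|^{\gamma}-1}{|\sigma|^{1+2s}}\,d\sigma,
\end{equation*}
understood in the principal-value sense at $\sigma=0$. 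The lemma therefore reduces to showing $A(\gamma)<0$ for every $\gamma\in(0,2s-1)$, with the constant $C:=-A(\gamma)$ depending only on $\gamma$ and $s$.

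To compute $A(\gamma)$ I would split $\mathbb{R}$ at $\sigma=0$ and $\sigma=-1$, applying the substitutions $1+\sigma=e^\rho$ on $(0,\infty)$, $1+\sigma=e^{-\rho}$ on $(-1,0)$, and $1+\sigma=-e^\rho$ on $(-\infty,-1)$. Using $(e^\rho-1)^{1+2s}=2^{1+2s}e^{(1+2s)\rho/2}\sinh^{1+2s}(\rho/2)$, $(e^\rho+1)^{1+2s}=2^{1+2s}e^{(1+2s)\rho/2}\cosh^{1+2s}(\rho/2)$, and $e^{a\rho}+e^{b\rho}=2e^{(a+b)\rho/2}\cosh((a-b)\rho/2)$, the arithmetic of the exponents collapses and the three pieces combine into
\begin{equation*}
A(\gamma)=2^{-2s}\!\int_0^\infty\!\bigl[\cosh(\tfrac{2\gamma+1-2s}{2}\rho)-\cosh(\tfrac{1-2s}{2}\rho)\bigr]\bigl[\sinh^{-(1+2s)}(\rho/2)+\cosh^{-(1+2s)}(\rho/2)\bigr]d\rho.
\end{equation*}
The sign is then immediate: the second bracket is positive, and since $\cosh$ is even and strictly increasing on $[0,\infty)$, the first bracket is strictly negative on $(0,\infty)$ exactly when $|2\gamma+1-2s|<|1-2s|=2s-1$, i.e.\ exactly when $\gamma\in(0,2s-1)$; the two endpoints $\gamma\in\{0,2s-1\}$ yield equality and hence $A(0)=A(2s-1)=0$.

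The main technical obstacle I anticipate is the bookkeeping at the origin: for $s>1/2$ each of the three pieces of the decomposition is individually divergent at $\sigma=0$ and one must check that the divergences cancel. They do, because the first bracket in the formula above vanishes like $\gamma(\gamma+1-2s)\rho^{2}$ as $\rho\to 0$, which precisely tames the $\sinh^{-(1+2s)}$ singularity in the admissible range $s<1$. Once this is in place, the bound $\mathcal{I}_{\hat{x}}(v_\gamma)(x)\le -C|x|^{\gamma-2s}$ with $C=-A(\gamma)>0$ follows immediately.
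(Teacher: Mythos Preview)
Your proof is correct and follows essentially the same route as the paper: the scaling substitution $\tau=|x|\sigma$ reduces the claim to the negativity of the one-dimensional constant $A(\gamma)=\int_{\mathbb{R}}\frac{|1+\sigma|^{\gamma}-1}{|\sigma|^{1+2s}}\,d\sigma$, which the paper establishes by citing \cite[Lemma~B.1]{BCGJ}. The only difference is that you actually carry out the logarithmic substitution and sign analysis in full, whereas the paper's proof simply invokes the reference; your derived formula and the integrability/sign checks are correct.
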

	\begin{proof}
		Let $x\in \mathbb{R}^{N}\setminus\{0\},$ and $\gamma\in\left(0,2s-1\right)$. 
		By using the change of variable $t=\tfrac{\tau}{|x|},$ we have that
		\[
			\begin{aligned}
				\mathcal{I}_{\hat{x}} (v_\gamma)(x)&=|x|^{\gamma-2s}
				\int_{\mathbb{R}}
				\dfrac{ |1+t|^\gamma-1}
				{|t|^{1+2s}}dt,
			\end{aligned}
		\]
		where the last integral is finite and negative by using, one more time, the ideas of \cite[Proof of Lemma B.1]{BCGJ}.
	\end{proof}

\begin{lemma} \label{cota1+} 
    For every $0<\gamma<2s$ and $y_0\in\mathbb{R}^{N}$ there exists $C > 0$ such that
    $$
        \Lambda_N^s (|x-y_0|)^\gamma\leq C |x-y_0|^{\gamma-2s}\ ,\,x\in\mathbb{R}^{N},
     $$
    here $\Lambda_N^s$ is a maximal operator defined as
    \[
	    \Lambda_N^s (v)(x)\coloneqq\sup_{\theta\in\mathbb{S}^{N-1}}\int_{\mathbb{R}}
    \dfrac{{v}(x+\tau \theta)-{v}(x)}{|\tau|^{1+2s}} d\tau.
    \]
	\end{lemma}
\begin{proof} Follows the same lines of the proof of \cite[Lemma 3.3]{DQT}.
      \end{proof}

	To prove our last auxiliary result we have to take into account the geometry properties of our domain. In particular we will use that, by \cite[Lemma 6.2]{BGI},
	\begin{equation}\label{geometria}
		\Omega= \bigcap\limits_{z\in \partial\Omega}B_R(z-R\nu(z)),
	\end{equation}
for some $R>0$ whose value it is related with the principal curvatures of $\partial\Omega$ and $\nu(z)$ denotes the outward normal unit vector of $\Omega$ in $z\in\partial\Omega$.
%
Now we can establish the following result.
	\begin{lemma} \label{cota11}
		Let $u$ be a {viscosity}  solution  of \eqref{eq:regularidad} with  $f$ bounded 
		and $g$ satisfying \eqref{holder-g}. Then there exists $C>0$ such that
 		\[
 		    u(x)-g(z)\geq -C \mbox{dist}(x,\partial\Omega)^\gamma, \quad x\in\overline{\Omega}
 		\]
        for all $z \in \partial \Omega$ verifying that $\mbox{dist}(x,\partial\Omega)=|x-z|$ and with
        \[
            \gamma\in \begin{cases}
	           (0,2s-1) , &\text{ if }g\equiv0,\\
	            ({0},\beta-s) , &\text{ if }g\neq0.
        \end{cases}
        \]	
	\end{lemma}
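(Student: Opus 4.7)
My plan is to construct a lower barrier and invoke the comparison principle. Fix $x_0 \in \overline{\Omega}$ and let $z \in \partial\Omega$ realize $d_0 := |x_0 - z| = \mathrm{dist}(x_0, \partial\Omega)$. By the representation \eqref{geometria}, there is a ball $B_R(y_0)$, with $y_0 = z - R\nu(z)$, that contains $\Omega$ and is tangent to $\partial\Omega$ at $z$. I would construct a function $v$ satisfying (i) $v \leq u$ on $\mathbb{R}^N \setminus \Omega$, (ii) $-\Lambda_1^s v \leq f$ in $\Omega$ in the viscosity sense, and (iii) $v(x_0) \geq g(z) - C d_0^\gamma$ for a constant $C$ independent of $x_0$ and $z$. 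Comparison will then yield $u(x_0) \geq v(x_0) \geq g(z) - C d_0^\gamma$, which is the claim.

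The natural candidate is
$$v(y) := g(z) - M_g\,|y - z|^\gamma - A\bigl(R^2 - |y - y_0|^2\bigr)_+^\gamma,$$
with $A > 0$ to be chosen large. For (i): on $\mathbb{R}^N \setminus \Omega$ the cutoff term is nonnegative, so \eqref{holder-g} gives $v(y) \leq g(z) - M_g|y-z|^\gamma \leq g(y) = u(y)$, regardless of $A$. For (iii): expanding $y - y_0 = (y - z) + R\nu(z)$ and noting that convexity of $\Omega$ forces $\langle y - z, \nu(z)\rangle \leq 0$ for $y \in \Omega$, one gets the geometric inequality
$$R^2 - |y - y_0|^2 = -|y - z|^2 - 2R\langle y - z, \nu(z)\rangle \leq 2R\,|y - z|, \qquad y \in \Omega.$$
Evaluating at $x_0$ and using $\gamma > 0$ produces $v(x_0) \geq g(z) - (M_g + A(2R)^\gamma)\,d_0^\gamma$, so (iii) holds with $C := M_g + A(2R)^\gamma$.

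The heart of the argument, and the hard part, is (ii). The function $v$ is smooth in $\Omega$, so $\mathcal{I}_\theta v$ is computed classically. Lemma \ref{cota1+} applied to the first term gives $\mathcal{I}_\theta(|\cdot - z|^\gamma)(y) \leq C_1|y - z|^{\gamma - 2s}$ uniformly in $\theta$, while a scaling of Lemma \ref{barrera_2} to $B_R(y_0)$ gives $\mathcal{I}_\theta\bigl[(R^2 - |\cdot - y_0|^2)_+^\gamma\bigr](y) \leq -c_2 R^{2s}(R^2 - |y - y_0|^2)^{\gamma - 2s}$, also uniformly in $\theta$. Combining and then invoking the geometric inequality together with $\gamma - 2s < 0$ (which flips the inequality upon taking the $(\gamma - 2s)$-power), one arrives at
$$\mathcal{I}_\theta v(y) \geq \bigl(A\,c_2\,2^{\gamma - 2s}\,R^{\gamma} - M_g C_1\bigr)\,|y - z|^{\gamma - 2s}, \qquad y \in \Omega,\ \theta \in \mathbb{S}^{N-1}.$$
Since $|y - z| \leq \mathrm{diam}(\Omega)$ and $\gamma - 2s < 0$ imply $|y - z|^{\gamma - 2s} \geq (\mathrm{diam}\,\Omega)^{\gamma - 2s}$, choosing $A$ sufficiently large (depending only on $M_g$, $C_1$, $c_2$, $R$, $\gamma$, $s$, $\|f\|_\infty$, and $\mathrm{diam}\,\Omega$) forces $\Lambda_1^s v(y) \geq \|f\|_\infty \geq -f(y)$ everywhere in $\Omega$. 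Hence $v$ is a classical (and therefore viscosity) subsolution, and the comparison principle for \eqref{eq:regularidad} closes the argument.

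The genuine obstacle I anticipate is exactly the balancing in (ii): the Hölder term $-M_g|y-z|^\gamma$ alone is not a subsolution, because $\mathcal{I}_\theta(|\cdot - z|^\gamma)$ can be of order $|y-z|^{\gamma - 2s}$, which blows up near $z$; the role of the ball barrier $-A(R^2 - |y - y_0|^2)_+^\gamma$ is to inject a positive contribution of the same singular scale, and the geometric inequality $R^2 - |y - y_0|^2 \leq 2R|y - z|$ is precisely what allows this positive contribution to absorb the bad one uniformly in the direction $\theta$.
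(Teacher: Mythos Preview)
Your proof is correct and follows essentially the same route as the paper: the same barrier
\[
g(z)-M_g|y-z|^\gamma-A\bigl(R^2-|y-y_0|^2\bigr)_+^\gamma,
\]
the same geometric input \eqref{geometria}, and the same appeal to Lemmas~\ref{barrera_2} and~\ref{cota1+} together with the comparison principle.

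One point is worth recording. In step~(ii) you work direction by direction with the linear functionals $\mathcal{I}_\theta$, retain the full singular estimate $\mathcal{I}_\theta[(R^2-|\cdot-y_0|^2)_+^\gamma](y)\le -c\,(R^2-|y-y_0|^2)^{\gamma-2s}$ from Lemma~\ref{barrera_2}, and then invoke the geometric inequality $R^2-|y-y_0|^2\le 2R|y-z|$ so that this negative term, of order $|y-z|^{\gamma-2s}$, absorbs the positive contribution $M_gC_1|y-z|^{\gamma-2s}$ coming from the H\"older part. The paper instead passes through the extremal inequality $-\Lambda_1^s v_1\le -\Lambda_1^s v_2+\Lambda_N^s(v_2-v_1)$ and, as written, uses only the simplified bound \eqref{desp}, which replaces $(R^2-|y-y_0|^2)^{\gamma-2s}$ by the constant $R^{2(\gamma-2s)}$; that constant alone cannot control $|y-z|^{\gamma-2s}$ near $z$. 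Your balancing of the two singular terms via the geometric inequality is exactly what is needed and makes the argument watertight.
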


	\begin{proof} 
	    Let us first consider the case $g\equiv0$. For that let $\gamma\in(0,2s-1),$ 
	    $z\in \partial\Omega$ and $M>0$ 
	    a positive constant that will be chosen later. We define the non-positive function
		\[
			g_{z,M}^\gamma(x)\coloneqq -M\left(R^2-|x-y_z|^2\right)_{+}^\gamma,
			\quad\mbox{where}\quad y_z:=z-R\nu(z),\quad x\in\mathbb{R}^{N}.
		\] 
		Since, by Lemma \ref{barrera_2}, we know that there exists $C>0$ such that
		\begin{equation}\label{desp0}
			-\Lambda_1^s g_{z,M}^\gamma(x)\le 
			-CM\left(R^2-|x-y_z|^2\right)^{\gamma- {s}},\quad x\in B_R(y_z),
		\end{equation}
		using the fact that $\gamma-{s}<0,$ we get
		\[
			-\Lambda_1^s g_{z,M}^\gamma(x)\le -CMR^{2(\gamma- {s})},\quad
			x\in B_R(y_z).
		\]
		Taking now $M\ge \tfrac{\|f\|_{\infty}}{CR^{2(\gamma- {s})}},$
		 (independent of $z$), it is clear that
		\[
			-\Lambda_1^s g_{z,M}^\gamma(x)\le -\|f\|_{L^{\infty}(\Omega)}
			\le -\Lambda_1^s u(x), \quad 
			x \in {\Omega}.
		\]

		On the other hand, since by \eqref{geometria} $\Omega\subseteq B_R(y_z)$, 
		we also have that $g_{z,M}^\gamma(x){\leq}0=u(x)$ for any $x\in\mathbb{R}^N\setminus {\Omega}\,$. Thus the comparison principle given in \cite{DPQR_FC} allows us to affirm
		\[
			u(x)\ge g_{z,M}^\gamma(x),\quad x\in\mathbb{R}^N.
		\]
		
		Let us now consider a fixed but arbitrary $x\in \Omega$. Taking $z\in\partial\Omega$ such that
		$|z-x|=\mbox{dist} (x, \partial\Omega)$ by using the Cauchy–Schwarz inequality we conclude that
		\begin{equation}\label{despp}
				\begin{aligned}
				g_{z,M}^\gamma(x)&=-M\left(R^2-|x-z+R\nu(z)|^2\right)^\gamma\\
				&=-M\left(-|x-z|^2-2R(x-z)\cdot\nu(z)\right)^\gamma\\
				&=-M\left||x-z|^2+2R(x-z)\cdot\nu(z)\right|^\gamma\\
				&\ge-M\left(|x-z|^{2\gamma}+2R|x-z|^\gamma\right)\\
				&=-M|x-z|^\gamma\left(|x-z|^{\gamma}+2R\right)\\
				&\ge-M\mbox{dist} (x, \partial\Omega)^\gamma\left(\max_{y\in\Omega,\, 
					z\in\partial\Omega}|y-z|^{\gamma}+2R\right),
			\end{aligned}
		\end{equation}
	
		as wanted.
		
		\medskip
		
		In the case $g\not=0,$ 
		let {us consider a fixed} $\gamma\in(0,\beta-s)$ {that will be selected later.  For} every $x\in\mathbb{R}^{N}$ we define the auxiliary function,
		$$
		    h(x)\coloneqq g(z)-M_g|x-z|^{\beta}+\mu g_{z,M}^\gamma(x),\quad z\in\partial\Omega,
		$$
		where $M_g$ is the H\"older constant given in \eqref{holder-g} and $\mu=\mu(R)>0$ is a fixed, 
		but arbitrary, parameter that will be taken later. Applying the standard extremal inequality
		$$
		    -\Lambda_1^sv_1\leq- \Lambda_1^sv_2+ \Lambda_N^s(v_2-v_1),
		$$ 
		for $v_1=h(x)$ and $v_2=\mu g_{z,M}^\gamma(x)$, Lemma \ref{cota1+} and \eqref{desp0}, 
		we get
		$$
		    -\Lambda_1^sh(x)\leq M_g|x-z|^{\beta-2s}-{CM\mu|x-z|^{\gamma-s}},
		$$
		with $x\in B_{R}(y_z),\, y_z=z-R\nu(z)$. Thus by using the fact that $\beta\in(s,2s)$ 
		by choosing  $\gamma<{\beta-s}$ and $\mu$ large enough, we obtain that 
		\[
		    -\Lambda_1^sh(x)\leq -\|f\|_{L^{\infty}(\Omega)} 
		    \text{ in } B_R(y_z). 
		\]
		Since, by \eqref{holder-g},  
		it is also true that $h(x)\leq g(x),\, x\in \mathbb{R}^{N}\setminus B_R(y_z)$ 
		we obtain that the function $h$ is a subsolution of \eqref{eq:regularidad}. Thus, by comparison 
		we have that $h(x)\leq  u(x)$ for all $x \in  \mathbb{R}^N$. 
		Doing as we did in the case $g=0$ we consider now 
		$z$ such that $|z-x|=\mbox{dist} (x,\partial\Omega)$. 
		So that, by using \eqref{despp} we conclude 
		$$
		    u(x)-g(z)\geq  h(x)-g(z)\geq -C\mbox{dist} (x, \partial\Omega)^\gamma,
		$$
		as wanted.		
	\end{proof}
	
	We are now able to conclude this section by proving the regularity result. 
	\begin{demoregularidad}
		We proceed again as in \cite{BGI}.
		Let us to consider some points, fixed but arbitrary, $x_0,y_0\in\bar\Omega$. We have 
		to prove that there exists $L>0$ such that
	    \begin{equation}\label{puntito}
	        u(x_0)\leq u(y_0)+L|x_0-y_0|^{\gamma},
	    \end{equation}	
	    with
	    \[
            \gamma\in \begin{cases}
	           (0,2s-1) , &\text{ if }g\equiv0,\\
	            ({0},\beta-s) , &\text{ if }g\neq0.
        \end{cases}
        \]	 
	     For that, on one hand we notice that, by Lemma \ref{barrera_1}, 
	     there exists $C>0$, independent of $y_0,$ satisfying
        \[
            -\Lambda_1^s H_{y_0}(x) \ge CL|x-y_0|^{\gamma-2s},\, x\neq y_0,
         \] 
         where 
         \[
            H_{y_0}(x):=u(y_0)+L|x-y_0|^{\gamma},\quad x\in\mathbb{R}^{N}.
          \] 
          Therefore we can choose $L>0$ in order to have
            \begin{equation}\label{ppa}
                -\Lambda_1^s H_{y_0}(x) 
                \ge CL\delta^{\gamma-2s}\geq\|f\|_{L^{\infty}(\Omega)}
                \geq -\Lambda_1^s u(x),\quad |x-y_0|<\delta,\, x\in\Omega,
            \end{equation}
for a fixed but arbitrary $0<\delta<1$. Let us now prove that
\begin{equation}\label{ppa2}
u(x)\leq H_{y_0}(x)\quad\mbox{ in $\mathbb{R}^N\setminus (B_{\delta}(y_0)\cap\Omega)$.}
\end{equation}
Indeed if $x\in \Omega\cap (\mathbb{R}^N\setminus B_{\delta}(y_0))$ we can pick $L>0$ large enough such that \eqref{ppa2} is satisfied by using the fact that $u$ is a bounded function and $\gamma>0$. Moreover when  {$g\equiv 0$ and} $x\in\mathbb{R}^N\setminus \Omega$ applying Lemma \ref{cota11} we know that there exists $C>0$, independent of $y_0$, such that
$$u(x)-u(y_0)=-u(y_0)\leq C\mbox{dist}(y_0,\partial\Omega)^{\gamma}\leq C|x-y_0|^{\gamma}\leq L|x-y_0|^{\gamma},$$
by choosing $L\geq C$. Therefore, from \eqref{ppa} and \eqref{ppa2}, applying one more time the comparison principle given in \cite{DPQR_FC} we conclude that
$$u(x)\leq H_{y_0}(x),\quad x\in\mathbb{R}^{N},$$
that implies \eqref{puntito} by taking $x=x_0\in\bar{\Omega}.$

For the general case $g\not =0$ we borrow some ideas of \cite{DQT}. For that we take $y_0\in\overline{\Omega}$, and {a small parameter $\nicefrac12>{\delta}$.} For some $L>0$, fixed but arbitrary, we define the function 
$$\Phi (x):= u(x)-u(y_0)-L|x-y_0|^\gamma.$$
We claim that, for $L$ large enough, just in terms of the data and ${\delta}$, but not on $y_0$, $\Phi$ is non-positive in $\mathbb{R}^{N}$. We notice that, from this, we get that
$$u(x)-u(y_0)\leq L|x-y_0|^\gamma \quad \mbox{for all}\quad {x\in}\mathbb{R}^{N},$$
so, since $L$ does not depend on $y_0$, interchanging the role of $y_0$ and $x$ we conclude the $\gamma$-H\"older regularity of $u$ as wanted.
Suppose by contradiction that the previous claim is false, that is 
\begin{equation}\label{vv}
\sup_{\mathbb{R}^{N}} \Phi> 0.
\end{equation}
With an initial choice 
{$L \geq 2\nicefrac{\|u\|_\infty}{{\delta}^\gamma}$
it is follows that the supremum, that has to be positive, is achieved at some point $\bar x\neq y_0$, $\bar x\in B_{{\delta}}(y_0)$. Let us now prove that this would be impossible by the fact that $\bar x$ cannot belong to $\Omega$ nor to $\mathbb{R}^N\setminus\Omega$. Indeed, if $\bar x\in\Omega$} we consider $\phi(x)\coloneqq H_{y_0}(x) \coloneqq u(y_0){+}L|x-y_0|^\gamma$ as a test function  (regarded as a subsolution to the problem) at the maximum point $\bar x$ 
to obtain
$$-\Lambda_1^s H_{y_0}(\bar x) \le f,$$
{that contradicts \eqref{ppa} by taking $L>0$ large enough.
    Let us now conclude by proving that $\bar x \in  \mathbb{R}^{N}\setminus \Omega$ is not     
    possible too}. In fact, {if $\mbox{dist} (y_0,\partial\Omega):=|y_0-z|,\, z\in\partial\Omega$}, 
    since $\gamma<\beta-s$, by Lemma \ref{cota11}, we have
    \[
        u(\bar x)-u(y_0)\leq u(\bar x)-g(z)+g(z)-u(y_0) \leq g(\bar x)-g(z)+C \mbox{dist}({y_0},
        \partial\Omega)^\gamma,\, \bar x \in  (\mathbb{R}^{N}\setminus \Omega),
     \]
    and by the H\"older condition for $g$ we get
    \begin{equation}\label{desppp}
        u(\bar x)-u(y_0)\leq M_g |  \bar x-z |^\beta+C |y_0-z|^\gamma 
        \leq  \tilde C |\bar x -y_0|^\gamma.
    \end{equation}
    We notice that to obtain the last inequality we have used  that $\gamma<\beta$ and  
    the simple facts that  $|y_0-z| \leq |y_0-\bar x|<\delta<1$ and, therefore, 
    $|\bar x -z|\leq 2 |\bar x -y_0|(<2\delta<1)$.	
We conclude by noticing that \eqref{desppp} is also not possible by taking 
$L \geq \tilde C$ since this implies 
$\Phi(\bar x)\leq 0$ against the hypothesis \eqref{vv}. 
So that the regularity follows.
	\end{demoregularidad}

\begin{remark} {\rm
A similar result concerning with the regularity up to the boundary for truncated fractional laplacians operators with zero boundary datum can be also find in \cite[Proposition 7.1]{biri}. Note that both results, ours and the one obtained in the aforementioned work, do not show that this regularity is optimal for these operators.}
\end{remark}


\section{Eigenvalue problem} \label{sect-eigen}

	Before starting the study of the eigenvalue problem, we need to establish a maximum principle given in the next	lemma 
	\begin{lemma}[Strong Maximum Principle] \label{SMP}
		Let $u$ be a non-positive  viscosity solution of 
		$$
		 \begin{cases}
		-\Lambda_1^s u(x)\leq 0 & \mbox{in}\quad \Omega,\\
		u(x)=0 & \mbox{in}\quad \mathbb{R}^N\setminus\Omega.
		\end{cases}		
$$ Then $u =0$ or $u<0$.
	\end{lemma}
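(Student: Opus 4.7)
The plan is to argue by contradiction: assume $u \not\equiv 0$ in $\mathbb{R}^N$ and that there exists an interior point $x_0 \in \Omega$ with $u(x_0)=0$; I will show this forces $u \equiv 0$ everywhere, contradicting the first assumption.

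Since $u \le 0$ in $\mathbb{R}^N$ and $u(x_0)=0$, the constant function $\phi \equiv 0$ is a smooth test function touching $u$ from above at $x_0$. For any small $r>0$, consider the function $v_r$ equal to $0$ on $B_r(x_0)$ and equal to $u$ outside $B_r(x_0)$. The viscosity subsolution condition for $-\Lambda_1^s u \le 0$, in the sense of \cite{DPQR_FC}, then gives
\[
\inf_{\theta \in \mathbb{S}^{N-1}} \int_{\mathbb{R}} \frac{v_r(x_0+\tau\theta)-v_r(x_0)}{|\tau|^{1+2s}}\,d\tau \;\ge\; 0.
\]
Because $v_r \equiv 0$ on $B_r(x_0)$, the contribution coming from $|\tau|\le r$ vanishes and the inequality reduces to
\[
\inf_{\theta \in \mathbb{S}^{N-1}} \int_{|\tau|>r} \frac{u(x_0+\tau\theta)}{|\tau|^{1+2s}}\,d\tau \;\ge\; 0.
\]

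The key point is now an elementary sign squeeze: the integrand is pointwise $\le 0$ because $u\le 0$, so each individual integral is $\le 0$, while the infimum is simultaneously $\ge 0$. These two bounds force the integral to equal zero for \emph{every} direction $\theta \in \mathbb{S}^{N-1}$. Hence $u(x_0+\tau\theta)=0$ for almost every $\tau$ with $|\tau|>r$, and by continuity of $u$ for every such $\tau$. Letting $r \to 0^+$ and varying $\theta$ over the whole sphere, every point of $\mathbb{R}^N \setminus\{x_0\}$ lies on some line $\{x_0+\tau\theta:\tau\in\mathbb{R}\}$, so $u\equiv 0$ there, and continuity at $x_0$ closes the argument.

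The main obstacle, which I expect to be purely technical, is confirming that the viscosity framework adopted in \cite{DPQR_FC} indeed allows the constant test function $\phi\equiv 0$ together with the truncate-and-extend device employed above, and that viscosity solutions are continuous enough to promote almost everywhere vanishing to pointwise vanishing. Once these standard facts are in place, the strong maximum principle follows immediately from the sign squeeze, with no Hopf-type barrier or line-to-ball propagation argument needed.
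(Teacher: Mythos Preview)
Your argument is correct and genuinely different from the paper's. The paper proceeds by contradiction as well but constructs an auxiliary smooth test function $\varphi$ with $u\le\varphi\le0$, supported in a small ball $B_r(x_1)$ around a point where $u<0$, and then exploits a \emph{single} direction $\theta=(x_1-x_0)/|x_1-x_0|$: along that line the far contribution of the truncated integral is strictly positive (because $-u>0$ near $x_1$), which directly contradicts the viscosity subsolution inequality. Your approach instead takes the trivial test function $\phi\equiv0$ and uses the global sign $u\le0$ to squeeze \emph{every} directional integral to zero simultaneously, forcing $u\equiv0$ on all lines through $x_0$. This is cleaner and avoids constructing $\varphi$; it works precisely because $u\le0$ everywhere, so the infimum over directions being nonnegative pins each integral to zero. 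The paper's route, by contrast, only needs negativity of $u$ somewhere and a single well-chosen direction, which is the more standard template for strong maximum principles and might adapt more readily to situations without a global sign. Regarding your flagged technicalities: the truncate-and-extend device with a smooth $\phi$ touching $u$ from above at $x_0$ is exactly what the paper's viscosity framework (and its own proof of this lemma) uses, so there is no obstruction; and upper semicontinuity of $u$ already suffices to upgrade a.e.\ vanishing along each line to pointwise vanishing.
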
 
	\begin{proof} 
		Let us suppose by contradiction that there exits $x_0 \in \Omega$ 
		such that $u(x_0)=0$ and $u<0$ in $B_r(x_1)\subset \Omega$ 
		for some $r>0$, $x_1\in\Omega$.
 		We take now $\varphi\equiv0$ as the regular function in the notion of viscosity solution ($u \leq \varphi\equiv 0$).
		Thus, if we define 
		$$
		    \theta\coloneqq\frac{x_1-x_0}{|x_1-x_0|}\in\mathbb{S}^{N-1}
		 $$ 
		and we chose $\delta>0$ small enough such that
 		$B_\delta(x_0)\subset \Omega\setminus B_r(x_1)$, we get 
 		\[
			\int_{\mathbb{R}\setminus(-\delta,\delta)}
			\frac{-{u }(x_0+\tau \theta)}{|\tau|^{1+2s}} d\tau>0.		
		\]
  		Since $\varphi\equiv 0$ the previous inequality clearly implies
  		\begin{equation}\label{simpli}
  			\sup_{z\in\mathbb{S}^{N-1}}\int_{-\delta}^{\delta}\frac{\varphi(x_0)-{\varphi} (x_0+\tau z)}
			{|\tau|^{1+2s}} d\tau+ \int_{\mathbb{R}\setminus(-\delta,\delta)}
			\frac{\varphi(x_0)-{u }(x_0+\tau z)}{|\tau|^{1+2s}} d\tau
			>0.
		\end{equation}
		By using the fact that $x_0$ is a local maximum for 
		$u-\varphi$ and $-\Lambda_1^s u\leq 0$ in the viscosity sense, we also get 
		$$
			-\inf_{z\in\mathbb{S}^{N-1}}\int_{-\delta}^{\delta}\frac{-\varphi(x_0)+{\varphi} (x_0+\tau z)}
			{|\tau|^{1+2s}} d\tau+ \int_{\mathbb{R}\setminus(-\delta,\delta)}
			\frac{-\varphi(x_0)+{u }(x_0+\tau z)}{|\tau|^{1+2s}} d\tau
			\leq 0,
		$$
		that is in contradiction with \eqref{simpli}.
	\end{proof}
	
	We can now show the main result of this section.	
	\begin{theorem} \label{autof}
	    Assume $s>\nicefrac12.$
		There exist $\mu_1>0$ and $\varphi_1\in\mathcal{C}(\mathbb{R}^{N})$ negative in 
		$\Omega$
		solving the Dirichlet problem
		$$
			\begin{cases}
				-\Lambda_1^s \varphi_1(x)=\mu_{1}\varphi_1 
				&\text{in }\Omega,\quad\\
			 \varphi_1=0&\text{in } \mathbb{R}^{N}\setminus\Omega,\\
				\end{cases}
		$$
		in the viscosity sense.
	\end{theorem}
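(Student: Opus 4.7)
The plan is to obtain the eigenpair by a nonlinear Krein--Rutman argument applied to the solution operator of $-\Lambda_1^s$. Set
\[
K = \{ f \in C(\mathbb R^N) : f \le 0 \text{ in } \Omega,\ f = 0 \text{ in } \mathbb R^N \setminus \Omega\}
\]
and define $T\colon K \to K$ by $Tf = u$, where $u$ is the unique viscosity solution of $-\Lambda_1^s u = f$ in $\Omega$ with $u = 0$ outside; well-posedness comes from \cite{DPQR_FC}. The operator $T$ has four key properties. (i) It is positively $1$-homogeneous because $\Lambda_1^s$ is. (ii) It is monotone: if $f \le g \le 0$, then $Tg$ is a viscosity supersolution of $-\Lambda_1^s w = f$ with the same exterior data as $Tf$, so the comparison principle of \cite{DPQR_FC} gives $Tf \le Tg$. (iii) It is compact on $C(\overline\Omega)$, as Theorem \ref{regularidad.intro} supplies a uniform $C^\gamma(\overline\Omega)$-bound on the image of bounded sets and Arzel\`a--Ascoli applies (this is where $s>\nicefrac12$ is used). (iv) It is strongly negative, $Tf < 0$ in $\Omega$ whenever $f \in K$ with $f \not\equiv 0$, by Lemma \ref{SMP}.

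To extract the eigenpair I would restrict $T$ to an invariant sub-cone whose interior is non-empty in a suitable sense. Let $\psi_0 = T f_0$, where $f_0 \in K$ equals $-1$ on $\Omega$. Comparison with the barrier of Lemma \ref{barrera_2} together with Lemma \ref{SMP} yields two constants $c, C > 0$ such that
\[
-C\, d(x)^\gamma \le \psi_0(x) \le -c\, d(x)^\gamma \quad \text{in } \Omega,
\]
and the sub-cone
\[
K_0 := \{ f \in K : \text{there exist }\, 0 < a \le b \text{ with } b\,\psi_0 \le f \le a\,\psi_0 \text{ in } \Omega\}
\]
is non-empty, convex and $T$-invariant (the last point again using (i), (ii), (iv) together with the barrier estimates). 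On $K_0$ the normalized map $\Phi(f) := T f / \|T f\|_\infty$ is a continuous compact self-map of a closed bounded convex subset of $C(\overline\Omega)$, so Schauder's fixed point theorem furnishes $\varphi_1 \in K_0$ with $\|\varphi_1\|_\infty = 1$ and $\lambda > 0$ such that $T\varphi_1 = \lambda\varphi_1$, equivalently $-\Lambda_1^s \varphi_1 = \mu_1 \varphi_1$ with $\mu_1 := 1/\lambda > 0$. A final application of Lemma \ref{SMP} promotes $\varphi_1 \le 0$, $\varphi_1 \not\equiv 0$, to $\varphi_1 < 0$ in $\Omega$.

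The main obstacle I anticipate is the empty interior of the cone $K$ in $C(\overline\Omega)$, which prevents a direct application of the classical linear Krein--Rutman theorem and makes it delicate to guarantee that the eigenvalue $\lambda$ produced by the fixed-point step is strictly positive (equivalently, that $Tf$ cannot collapse to zero along the iteration). The work-around above---passing to the subcone $K_0$ of functions pinched between two multiples of the fractional torsion $\psi_0$---requires checking that $T$ preserves such two-sided bounds, a step that rests on the sharp barriers of Section \ref{sect-regul} together with the strong maximum principle. Stability of viscosity solutions under uniform convergence, which is needed to pass to the limit in the Schauder step and to read off the eigenvalue equation, is standard and introduces no additional difficulty.
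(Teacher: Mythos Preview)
Your overall strategy coincides with the paper's: invert $-\Lambda_1^s$ with homogeneous exterior data to obtain a monotone, positively $1$-homogeneous, compact operator $T$ on $\mathcal{C}(\overline\Omega)$ (compactness via Theorem~\ref{regularidad.intro}), and then extract an eigenpair by a Krein--Rutman type argument, with strict negativity coming from Lemma~\ref{SMP}. Up to this point your proposal is correct.

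The gap is in the sub-cone step. You assert that $\psi_0 = T(-\mathbf 1_\Omega)$ satisfies $\psi_0 \le -c\,d^\gamma$ in $\Omega$, i.e.\ a Hopf-type lower barrier, and that this makes $K_0$ invariant under $T$. But Lemma~\ref{barrera_2} only gives $\mathcal{I}_\theta(d^\gamma)\le -c_2 d^{\gamma-2s}$, which produces an \emph{upper} barrier for $|\psi_0|$ (this is Lemma~\ref{cota11}), not a lower one. A bound of the form $\psi_0\le -c\,d^\gamma$ is precisely the Hopf lemma that the paper itself flags as missing (see the Remark following Theorem~\ref{asymp}). Without it you cannot guarantee that $Tf\le a'\psi_0$ for some $a'>0$ when $f\in K_0$, so $T$-invariance of $K_0$ is unproven and the Schauder step does not close. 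A secondary issue is that $\Phi(f)=Tf/\|Tf\|_\infty$ lands on the unit sphere of the cone, which is not convex; you would have to pass to a slice $\{f\in K_0:\ell(f)=-1\}$ for a linear functional $\ell$, but this is routine compared with the first point.

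The paper sidesteps the Hopf obstacle altogether. It checks the single Krein--Rutman hypothesis by choosing $v_0\in K$ \emph{compactly supported} in $\Omega$: then $\mathcal{T}(v_0)<0$ in $\Omega$ by Lemma~\ref{SMP}, hence $\mathcal{T}(v_0)\le -\varepsilon$ on the compact set $\operatorname{supp}(v_0)$, and $M\mathcal{T}(v_0)\le v_0$ in all of $\mathbb{R}^N$ for $M$ large. No boundary behaviour of $\mathcal{T}(v_0)$ is needed. If you replace your $K_0$ construction with this compactly-supported trick, the rest of your argument goes through essentially as in the paper.
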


	\begin{proof}
		The idea is to apply the non-linear version of the Krein-Rutman Theorem 
		(see \cite{Ara, Maha}) in the real Banach space of continuous functions  
		$X\coloneqq \mathcal{C}(\overline\Omega)$, with  the uniform norm. 
		The cone will be the set of non-positive continuous functions 
		$K\coloneqq\mathcal{C}_{-}(\overline\Omega)$. 
		Notice that this produce a ``reverse inequality" since the standard is the nonnegative cone. 
		In fact, given $x,y \in X$  $ x\preceq y \iff y-x \in \mathcal{C}_{-}(\overline\Omega)$ 
		so  $y\leq x$ in $\overline\Omega$. For the  definition of 
		$\mathcal T$ we proceed to invert the operator $-\Lambda_1^s$. 
		For every $v\in\mathcal{C}(\overline{\Omega})$ we know that 
		there exists an unique viscosity solution $u\in\mathcal{C}(\overline{\Omega})$ of the problem
		$$
			\begin{cases}
			-\Lambda_1^s u=v &\text{in }\Omega,\quad\\
			u=0&\text{in } \mathbb{R}^{N}\setminus\Omega,\\
				\end{cases}
		$$
		{(see \cite[Theorem 1.1]{DPQR_FC})} then the operator 
		$\mathcal T\colon\mathcal{C}(\overline\Omega)\to 
		\mathcal{C}(\overline\Omega)$ given by 
		$\mathcal{T}(v)\coloneqq u$, is well defined.  
		Moreover, the operator $-\Lambda_1^s$ satisfies comparison principle 
		{(see \cite[Theorem 2.2]{DPQR_FC})}  
		thus $\mathcal{T}$ is increasing and also positively $1-$homogeneous.  
		With  the same arguments as in  \cite[Lemma 4.5]{CS}  (see also \cite[Corollary 4.7]{CS})  {we obtain that} $\mathcal{T}$ is continuos with the uniform norm.

		Moreover using the trivial function as a super solution and comparison we get that if 
			$v\in \mathcal{C}_{-}(\overline\Omega)$ then  $T( {v})\in \mathcal{C}_{-}(\overline\Omega)$.
	So 
	$$
		\mathcal T:\mathcal{C}_{-}(\overline\Omega)
		\to \mathcal{C}_{-}(\overline\Omega),
	$$
	is well defined.
	
	 Now by Theorem \ref{regularidad.intro} we also know that if $v_n$ is a  {uniformly bounded} sequence then
	$$
		\|\mathcal T(v_n)\|_{\mathcal{C}^{\beta}
		(\overline\Omega)}\leq C,
	$$
	for some $C>0$ (independent of  $n$) and  
	for some $\beta\in (0,1)$, thus we get that the operator is 
	compact using Ascoli-Arzela Theorem.

	If we take $v_0 \in K$ non trivial with compact support in $\Omega$ then $\mathcal T(v_0)<0$ 
	by the strong maximum principle so there exists  $M$ large such 
	that  $M\mathcal  T(v_0)\leq v_0$. 
	Therefore we have checked all the requirements to apply the Krein-Rutman 
	Theorem that allows us to conclude the existence of 
	$\varphi_1\in\mathcal{C}_{-}(\overline\Omega)$, 
	$\varphi_1\neq 0$ and $\mu_1>0$ such 
	that $\mathcal T(\varphi_1)=\mu_1\varphi_1$. 
	This implies the existence of an eigenvalue for 
	$-\Lambda_1^s$ associated to a  {nonpositive} eigenfunction 
	$\varphi_1$ as we wanted to show. Finally, notice that $\varphi_1<0$ by the strong maximum principle given in Lemma \ref{SMP}.
	\end{proof}
	
	\begin{remark}
		{\rm It is important to bear in mind that, as occurs in the local case, the simplicity of the eigenvalue found in the previous results is nowadays an open problem. }
	\end{remark}
\section{Evolution problem} \label{sect-evolution}
	This section is devoted to study the evolution problem given in 
	\eqref{ev_eq} under some conditions on the datum. We want to emphasize that 
	we restrict  our attention to the case when the outside datum does not depends on the time variable 
	$t$. Notwithstanding the above, we observe that the comparison principle presented in the next 
	theorem will be also true in the case when $g(x,t)$ as well as the existence result whenever we 
	require a Lipschitz assumption, with respect to $t$, on the function. 
	We start by introducing the notion of viscosity solution we will work with.
	
	\medskip
	
	Let $T>0.$ A bounded upper (resp. lower) semicontinuous function
	$u\colon\mathbb{R}^N\times[0,T]\to\mathbb{R}^N$
	is said to be a viscosity subsolution  (resp. supersolution) of 
	\begin{equation}\label{ev_eq_T}
		\begin{cases}
			u_t (x,t) =\Lambda_1^s u(x,t) &\text{in }\Omega\times(0,T),\\
			u(x,t)=g(x) &\text{in }
				(\mathbb{R}^N\setminus\Omega)\times\textcolor{black}{[}0,T),\\
			u(x,0)=u_0(x) &\text{in }\textcolor{black}{\Omega},
		\end{cases}
	\end{equation}
	if
	\begin{itemize}
		\item $u(x,t)\leq g(x)$ \mbox{ {(resp. $\geq$)}} in $(\mathbb{R}^N\setminus\Omega)\times\textcolor{black}{[}0,T);$
		\item $u(x,0)\leq u_0(x)$ \mbox{ {(resp. $\geq$)}} in $\textcolor{black}{\Omega};$
		\item For every smooth function 
			$\varphi\colon\mathbb{R}^N\times[0,\infty)\to \mathbb{R}$
			in every maximum (resp. minimum) point $(x_0,t_0)\in\Omega\times (0,T)$ of
			$u-\varphi$ in $B_\delta(x_0)\times (t_0-\delta,t_0+\delta)
			\subseteq \Omega\times (0,T)$ for some $\delta>0$ we have that
			\[
				 {u^*_t}(x_0,t_0)-\Lambda_1^s {u^*}(x_0,t_0)\le0\, (resp. \ge0)
			\]
			where
			\[
				 {u^*}(x,t)=
				\begin{cases}
					\varphi(x,t) &\text{ if } (x,t)\in
						B_\delta(x_0)\times (t_0-\delta,t_0+\delta),\\
					u(x,t) &\text{ if } (x,t)\not\in
						B_\delta(x_0)\times (t_0-\delta,t_0+\delta).
				\end{cases}
			\]
	\end{itemize}
	
	A bounded function $u\colon \mathbb{R}^N\times[0,T]\to\mathbb{R}^N$ is a
	viscosity solution of \eqref{ev_eq_T} if it is both subsolution and supersolution.
	
	\medskip
	
	Then, a bounded upper (resp. lower) semicontinuous function
	$u\colon\mathbb{R}^N\times[0,\infty) \to\mathbb{R}^N$
	is said to be a viscosity subsolution  (resp. supersolution) of \eqref{ev_eq} if
	$u$ is  a viscosity subsolution  (resp. supersolution) of \eqref{ev_eq_T} for any 
	$T>0.$ Consequently, a continuous function 
	$u\colon \mathbb{R}^N\times[0,\infty)\to\mathbb{R}^N$ is a
	viscosity solution of \eqref{ev_eq} if it is both subsolution and 
	supersolution.
	
	\medskip
	
	 	
	Our next aim is to show that, under reasonable hypothesis, there exists a unique viscosity solution of
	\eqref{ev_eq}, that is, to obtain the proof of Theorem \ref{teo-1.intro}. The proof is obtained as usual: first, we show a comparison 
	principle and then we conclude by the Perron's method using adequate sub and supersolutions.

	\begin{theorem} \label{comparacion}
		Let $T>0$, $u$ and $v$ be  viscosity subsolution and supersolution of 
		\eqref{ev_eq_T} respectively where $\Omega$ is a convex domain, then
		$u\le v$ on $\Omega\times [0,T]$.
	\end{theorem}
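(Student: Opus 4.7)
The plan is to argue by contradiction using the classical parabolic doubling-of-variables technique of Crandall--Ishii--Lions, adapted to the nonlocal operator $\Lambda_1^s$. Assuming $M := \sup_{\Omega \times [0,T]}(u-v) > 0$, the boundary and initial conditions $u \le g \le v$ in $(\mathbb{R}^N \setminus \Omega) \times (0,T)$ and $u(\cdot,0) \le u_0 \le v(\cdot,0)$ force the supremum to be attained in the interior. I would perturb by a penalty $\eta/(T-t)$ with $\eta>0$ small (to exclude $t=T$) and double variables via
\[
\Psi_\varepsilon(x,y,t,s) := u(x,t) - v(y,s) - \frac{|x-y|^2}{2\varepsilon} - \frac{(t-s)^2}{2\varepsilon} - \frac{\eta}{T-t},
\]
picking a maximizer $(\hat x, \hat y, \hat t, \hat s)$. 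Standard estimates give $|\hat x-\hat y|, |\hat t-\hat s|\to 0$ as $\varepsilon\to 0$ and place the maximizer in $\Omega\times\Omega\times(0,T)^2$ for $\varepsilon$ small. Then I would invoke the sub/supersolution definitions with test functions $\varphi(x,t) := c + \frac{|x-\hat y|^2}{2\varepsilon} + \frac{(t-\hat s)^2}{2\varepsilon} + \frac{\eta}{T-t}$ for $u$ and $\psi(y,s) := c' - \frac{|\hat x - y|^2}{2\varepsilon} - \frac{(\hat t - s)^2}{2\varepsilon}$ for $v$ (for any small $\delta>0$ in the nonlocal definition). Subtracting the two inequalities, the terms $(\hat t-\hat s)/\varepsilon$ cancel exactly and one obtains
\[
\frac{\eta}{(T-\hat t)^2} \le \inf_{\theta} \mathcal{I}_{\theta} \bar u(\hat x, \hat t) - \inf_{\theta} \mathcal{I}_{\theta} \bar v(\hat y, \hat s),
\]
where $\bar u, \bar v$ equal the test functions inside $B_\delta(\cdot) \times (\cdot-\delta,\cdot+\delta)$ and $u, v$ outside.

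To bound the right-hand side, given $\sigma > 0$ I would select $\theta^* \in \mathbb{S}^{N-1}$ nearly realizing the infimum for $\bar v$, so the RHS is at most $\mathcal{I}_{\theta^*}\bar u(\hat x,\hat t) - \mathcal{I}_{\theta^*}\bar v(\hat y, \hat s) + \sigma$. The crucial observation is \emph{translation invariance} of the doubling penalty: substituting $x = \hat x + \tau\theta^*$, $y = \hat y + \tau\theta^*$, $t=\hat t$, $s=\hat s$ into $\Psi_\varepsilon(x,y,t,s) \le \Psi_\varepsilon(\hat x,\hat y,\hat t,\hat s)$, the $|x-y|^2/\varepsilon$ and $(t-s)^2/\varepsilon$ terms cancel outright, yielding
\[
u(\hat x + \tau\theta^*, \hat t) - u(\hat x, \hat t) \le v(\hat y + \tau\theta^*, \hat s) - v(\hat y, \hat s), \qquad \forall \tau \in \mathbb{R}.
\]
Hence the $|\tau|>\delta$ portion of $\mathcal{I}_{\theta^*}\bar u(\hat x,\hat t) - \mathcal{I}_{\theta^*}\bar v(\hat y, \hat s)$ is nonpositive. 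In the $|\tau|\le\delta$ portion, where $\bar u, \bar v$ coincide with the explicit quadratics, the linear-in-$\tau$ contributions vanish by odd symmetry of $d\tau/|\tau|^{1+2s}$, while the quadratic-in-$\tau$ contributions (with opposite signs coming from $\varphi$ and $\psi$) are controlled by $C\delta^{2-2s}/\varepsilon$, finite since $s<1$. Sending $\delta \to 0$ first (legitimate since the maximum property of $u-\varphi$ at $(\hat x,\hat t)$ persists in any smaller neighbourhood, so the viscosity inequality may be applied with any smaller $\delta$), then $\sigma \to 0$, and finally $\eta \to 0$ delivers the desired contradiction $\eta/(T-\hat t)^2 \le 0$.

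The main obstacle I anticipate is the careful orchestration of the three small parameters $\varepsilon, \delta, \sigma$ in the correct order — in particular verifying that the nonlocal test-function framework in the definition opening this section permits shrinking $\delta$ freely while keeping $\varepsilon$ fixed. A secondary but necessary subtlety is confirming that the doubled maximizer remains away from the parabolic boundary: if $\hat x$ (or $\hat y$) were to limit to $\partial\Omega$ or $\hat t$ to $0$, then the exterior and initial compatibilities together with $|\hat x - \hat y|, |\hat t-\hat s|\to 0$ would force the value of $\Psi_\varepsilon$ at the maximizer to become nonpositive, contradicting the assumption $\sup (u-v) > 0$. Once these technical points are handled, the infimum-over-directions structure of $\Lambda_1^s$ introduces no further complication: the ``almost-optimal $\theta^*$'' argument is clean because no sup/inf nesting is present, so no Jensen--Ishii or semiconvex-envelope machinery is needed.
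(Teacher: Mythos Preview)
Your overall architecture --- parabolic penalization by $\eta/(T-t)$, doubling of variables, choosing a near-optimal direction $\theta^*$ for the infimum, and reducing to a single directional integral --- matches the paper's approach almost exactly. The paper too obtains the viscosity inequalities at the doubled maximizer with quadratic test functions, selects a direction $z_{\alpha,h}$ nearly realizing the infimum, and bounds the near-field by $C\alpha\delta^{2-2s}$.

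There is, however, a genuine gap in your translation-invariance step. You assert that substituting $(x,y)=(\hat x+\tau\theta^*,\hat y+\tau\theta^*)$ into the maximality of $\Psi_\varepsilon$ yields
\[
u(\hat x+\tau\theta^*,\hat t)-u(\hat x,\hat t)\le v(\hat y+\tau\theta^*,\hat s)-v(\hat y,\hat s)\qquad\text{for all }\tau\in\mathbb{R}.
\]
But this is valid only for those $\tau$ with $(\hat x+\tau\theta^*,\hat y+\tau\theta^*)$ in the domain over which $\Psi_\varepsilon$ was maximized. If you maximize over $\overline\Omega\times\overline\Omega$ (so that the maximum is attained), the inequality fails once either translated point leaves $\overline\Omega$; if instead you maximize over $\mathbb{R}^N\times\mathbb{R}^N$, nothing guarantees the supremum is attained, since $g$ is only assumed continuous and bounded (not uniformly continuous), and $u(x,t)-v(y,s)\le g(x)-g(y)$ need not become small along sequences $x_n,y_n\to\infty$ with $|x_n-y_n|$ bounded. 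Your concluding sentence that ``the $|\tau|>\delta$ portion is nonpositive'' therefore does not follow.

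The paper closes exactly this gap by splitting the tail integral into the part with $\hat x+\tau\theta^*\in\Omega$ (your translation argument, restricted to this range) and the part with $\hat x+\tau\theta^*\notin\Omega$, where one uses instead the exterior ordering $u\le g\le v$ to produce a strictly negative contribution $-\widetilde M\int_{L_{z_0}(\bar x)\cap(\mathbb{R}^N\setminus\Omega)}|\tau|^{-1-2s}\,d\tau$. Because the integration domains $\{\tau:\hat x+\tau\theta^*\in\Omega\}$ and $\{\tau:\hat y+\tau\theta^*\in\Omega\}$ do not coincide for $\hat x\neq\hat y$, the paper then passes to the limit $\alpha\to\infty$, $h\to 0$ and invokes the \emph{strict convexity} of $\Omega$ to obtain almost-everywhere convergence of the corresponding indicator functions --- a point your outline does not address at all. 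Once you incorporate this splitting and the exterior data, the rest of your argument goes through.
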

	
	\begin{proof}
 		Following the  ideas given in \cite{CIL}, 
 		we 
 		define the function
 		$$
 			\widetilde{u}(x,t)\coloneqq u(x,t)-
 			\tfrac{\varepsilon}{T-t},\quad\varepsilon>0,
 		$$
  		that satisfies, in the viscosity sense,
  		$$
  			\begin{cases}
			\widetilde {u}_t(x,t)-\Lambda_1^s \widetilde{u}(x,t)\leq-
			\tfrac{\varepsilon}{(T-t)^2}<0 &\text{in }\Omega\times(0,T),\\
			\widetilde{u}\to -\infty, &\text{when }
				t\to T^{-}.\\
				\end{cases}
		$$
		It is clear that, if we prove 
		$\widetilde{u}(x,t)\leq v(x,t)$ for $x\in\Omega$ and $t\in (0,T)$ 
		then the conclusion follows by taking $\varepsilon\to 0$.  
		Therefore let us assume that 
  		$$
  		 	\begin{cases}
				u_t(x,t)-\Lambda_1^s u(x,t)<0 &\text{in }\Omega\times(0,T),\\
				u\to -\infty, &\text{when }
					t\to T^{-},\\
			\end{cases}
		$$
 		and we suppose, by contradiction, that
 		$$
 			\sup_{x\in\overline{\Omega},\, t\in [0,T)}{u(x,t)-v(x,t)}
 			\coloneqq\eta>0.
 		$$
 
 		Since $u$ and $-v$ are bounded from above, 
 		$\overline\Omega$ is compact and $u\to -\infty$ as $t\to T^{-}$ then,  
 		for some $\alpha$ big enough that will be chosen later, there exists 
 		$(x_{\alpha}, y_{\alpha}, t_{\alpha}, r_{\alpha})$ the maximum point of
 		$$
 			G_{\alpha}(x,y,t,r)\coloneqq u(x,t)-v(y,r)-
 			\alpha(|x-y|^{2}+|t-r|^2),
 		$$
		with $(t,r)\in [0,T)^2,
 			 (x,y)\in \overline{\Omega}\times \overline{\Omega}.$
 		If we denote
 		$$
 			M_{\alpha}\coloneqq 
 			G_{\alpha}(x_{\alpha}, y_{\alpha}, t_{\alpha}, r_{\alpha})(\geq \eta>0),
 		$$
 		it is clear that 
 		$M_{\alpha}\searrow \widetilde{M}$ as $\alpha\to\infty,$ 
 		for some $\widetilde{M}\geq\eta>0$. 
 		Moreover, since
 		$$
 			M_{\frac{\alpha}{2}}
 			\geq G_{\frac{\alpha}{2}}(x_{\alpha}, y_{\alpha}, t_{\alpha}, 
 			r_{\alpha})
 			=M_{\alpha}+\frac{\alpha}{2}(|x_{\alpha}-y_{\alpha}|^{2}
 			+|t_{\alpha}-r_{\alpha}|^2),
 		$$
		it follows that 
		$$
			\lim_{\alpha\to\infty}\alpha |x_{\alpha}-y_{\alpha}|^{2}
			=\lim_{\alpha\to\infty}
			\alpha |t_{\alpha}-r_{\alpha}|^{2}= 0.
		$$
		We notice that the previos limits also implies that, if, by subsequence,
		\[
			\lim_{\alpha\to\infty}(x_\alpha,y_\alpha)\coloneqq
			(\widetilde x, \widetilde y)
			\in \overline{\Omega}\times \overline{\Omega},\text{ and } 
			\lim_{\alpha\to\infty}(t_\alpha,r_\alpha)=(\widetilde t, \widetilde r)
			\in [0,T)\times [0,T),
		\]
		then $\widetilde x=\widetilde y$ and $\widetilde t=\widetilde r.$
		We also observe that, since
		$$
			0<\eta\leq\widetilde{M}=\lim_{\alpha\to\infty}
			G_{\alpha}(x_{\alpha}, 
			y_{\alpha}, t_{\alpha}, r_{\alpha})
			\leq u(\widetilde x, \widetilde t)-v(\widetilde x, 
			\widetilde t)\leq \eta,
		$$
		clearly 
		\begin{equation}\label{mg}
		\eta=u(\widetilde x, \widetilde t)-v(\widetilde x, \widetilde t).
		\end{equation}
		By using the facts that $u(\cdot,t)\leq v(\cdot, t)$ in $\partial\Omega$ 
		and $u(x,0)=v(x,0),$ \eqref{mg} implies that 
		$\widetilde x=\widetilde y\notin \partial\Omega$ and 
		$\widetilde t=\widetilde r\neq 0$. Therefore, we can define
		\begin{equation}\label{interior}
			d_{\alpha}
			\coloneqq\min\{d(x_\alpha,\partial\Omega),\, 
			d(y_\alpha,\partial\Omega)\}>0,\mbox{ and }\delta_{\alpha}
			\coloneqq\min  \{t_{\alpha},\, r_{\alpha}\}>0.
		\end{equation}
		
		Without loss of generality, we can also assume that
		\begin{equation}\label{tecnico}
			\frac{d(\widetilde{x},\partial\Omega)}2<d_\alpha\text{ and }
				0<t_{\alpha}\leq r_{\alpha}.
		\end{equation} 
		
		We define now the regular function
		$$
			\psi_{\alpha}(z,t)\coloneqq v(y_{\alpha}, r_{\alpha})
			+\alpha(|t-r_{\alpha}|^2+|z-y_{\alpha}|^2),
		$$ 
		that clearly satisfies 
		\begin{equation}\label{entiempo}
			\partial_{t}\psi_{\alpha}(x_{\alpha}, t_{\alpha})=2\alpha(t_\alpha-r_\alpha).
		\end{equation} 
		Moreover $\psi_{\alpha}(z,t)$ touches $u$ from above in $(x_\alpha,t_{\alpha})$. 
		Indeed, by \eqref{interior}, if we consider $w\in\mathbb{R}^{N}$ and $\delta>0$
		such that $x_{\alpha}+w\in \Omega$ and $t_{\alpha}+\delta\in [0,T)$ since 			\[
			G_{\alpha}(x_{\alpha}, y_{\alpha}, t_{\alpha}, r_{\alpha})\geq 
			G_{\alpha}(x_{\alpha}+w, y_{\alpha}, t_{\alpha}+\delta, r_{\alpha}),
		\]
		 by \eqref{tecnico} we get that
		$$
			u(x_\alpha,t_{\alpha})-\alpha|x_{\alpha}-y_{\alpha}|^2
			-u(x_{\alpha}+w,t_{\alpha}+\delta)+\alpha|x_{\alpha}+w-y_{\alpha}|
			\geq -\alpha\delta^2.
		$$
		Thus
		$$
			u(x_{\alpha},t_{\alpha})-\psi_{\alpha}(x_{\alpha},t_{\alpha})
			\geq u(x_{\alpha}+w,t_{\alpha}+\delta)-\psi_{\alpha}(x_{\alpha}
			+w,t_{\alpha}+\delta),
		$$
		as wanted. Therefore, since $u$ is a subsolution, 
		by \eqref{entiempo} it follows that
		\begin{equation}\label{uno}
				0\geq 
				2\alpha(t_\alpha-r_\alpha)-\Lambda_1\psi_{\alpha}(x_{\alpha},t_{\alpha})
				=-\inf\{E_{z,\delta}(u,\psi_\alpha,x_\alpha,t_\alpha)\colon 
				z\in\mathbb{S}^{N-1}\}.
		\end{equation}
		for any $\delta<d_\alpha.$ Here
		$$
		\begin{array}{l}
			\displaystyle E_{z,\delta}(w,\phi,x,t)\coloneqq
			I^1_{z,\delta}(\phi,x,t)+
			I^2_{z,\delta}(w,x,t)+I^3_{z}(w,x,t)-2\alpha(t_\alpha-r_\alpha),\\[7pt]
			\displaystyle I^1_{z,\delta}(\phi,x,t)\coloneqq\int_{-\delta}^\delta
			\frac{\phi(x+\tau z,t)-\phi(x,t) }{|\tau|^{1+2s}} d\tau,\\[7pt]
			\displaystyle I^2_{z,\delta}(w,x,t)\coloneqq\int_{A_\delta^{z}(x)}
			\hspace{-.6cm}\frac{w(x+\tau z,t)-w(x,t) }{|\tau|^{1+2s}} d\tau
			\\[7pt] \qquad \text{ where } 
			A_\delta^{z}(x)\coloneqq \{t\in \mathbb{R}
			\setminus(-\delta,\delta)\colon x+tz\in\Omega\},\\[7pt]
			\displaystyle I^3_{z}(w,x,t)\coloneqq\int_{B^{z}(x)}
			\hspace{-.6cm}
			\frac{g(x+\tau z,t)-w(x,t) }{|\tau|^{1+2s}} d\tau \\[7pt]
			\qquad \text{ where } 
			B^{z}(x)\coloneqq \{t\in \mathbb{R}\colon x+tz\not\in\Omega\},
		\end{array}
		$$
		for $w$ and $\phi$ regular functions and $(x,t)\in\mathbb{R}^N\times(0,T).$
		Defining now 
		$$
			\varphi_{\alpha}(z,t)\coloneqq
			u(x_{\alpha}, t_{\alpha})-\alpha(|t-t_{\alpha}|^2+|z-x_{\alpha}|^2),
		$$
		reasoning in a similar way, we also get that $\varphi_{\alpha}(z,t)$ touches 
		$v$ from below in $(y_\alpha,r_{\alpha})$.
		Therefore, since it is also true that 
		$\partial_t\varphi_{\alpha}(y_{\alpha},r_{\alpha})
		=-2\alpha(r_\alpha-t_\alpha)$, 
		using now the fact that $v$ is supersolution, we obtain
		\begin{equation}\label{dos}
			0 \leq 2(t_\alpha-r_\alpha)-\Lambda_1\varphi_{\alpha}(y_{\alpha},r_{\alpha})
			=-\inf\{E_{z,\delta}(v,\varphi_\alpha,y_\alpha,r_\alpha)\colon 
				z\in\mathbb{S}^{N-1}\}.
		\end{equation}
		for any $\delta<d_{\alpha}.$

		Therefore, by \eqref{uno} and \eqref{dos}, for each $h>0$ 
		there exists $z_{\alpha,h}\in \mathbb{S}^{N-1} $ 
		such that
		\[
			E_{(z_{\alpha,h}),\delta}(u,\psi_\alpha,x_\alpha,t_\alpha)\geq 0,
			\quad \mbox{and}
			\quad  
			E_{(z_{\alpha,h}),\delta}
			(v,\varphi_\alpha,y_\alpha,r_\alpha)
			\leq h,
		\]
		for any $\delta\in(0,d_\alpha),$ so
		\begin{equation}\label{eq:estiinf}
			-h \le E_{(z_{\alpha,h}),\delta}(u,\psi_\alpha,x_\alpha,t_\alpha)- 
			E_{(z_{\alpha,h}),\delta}
			(v,\varphi_\alpha,y_\alpha,r_\alpha)
		\end{equation}
		for $0<\delta<\tfrac{d(\widetilde{x},\partial \Omega )}2.$ 
		We will get a contradiction from the previous estimate by finding an 
		upper bound and passing to the limit in a delicate way. 
		For that, let us first assume 
		\[
		    z_{\alpha,h} \to z_0,\quad\text{when } \alpha\to \infty, \text{ and } h \to 0,
		\]
		by a subsequence 
		if necessary. Let us now observe that there is a positive constant $C$ independent of $\delta,$ 
		$\alpha$ and $h$, such that
        \begin{equation}\label{eq:nueva1}
                  	\max 
			\left\{
				|I^1_{(z_{\alpha,h}),\delta}(\psi_{\alpha},x_\alpha,t_\alpha)|, 
				|I^1_{(z_{\alpha,h}),\delta}(\varphi_{\alpha},y_\alpha,r_\alpha)|, 
			\right\}
			\le C\alpha\delta^{2-2s}.
         \end{equation}		
		Let us now denote by $L_z(x)$ the line that passes trough $x$ with direction $z$, that is
		\[
			L_z (x) \coloneqq x+\tau z,\qquad \tau \in \mathbb{R}.
		\]
		To continue our argument it will be crucial to bear in mind that
		$z_{\alpha,h} \to z_0$ when $\alpha\to \infty,$ and  $h \to 0,$ and the fact that 
		$\Omega$ is a convex domain. Indeed this allows us to affirm that
		\[
			\begin{cases}
				&\mathbf{1}_{L_{z_{\alpha,h}}(y_\alpha)\cap \Omega}, 
				\mathbf{1}_{L_{z_{\alpha,h}}
					(x_\alpha)\cap \Omega} \to 
				\mathbf{1}_{L_{z_0}(\widetilde{x}) 
					\cap \Omega}\quad \mbox{ a.e.,}	\\
				&\mathbf{1}_{L_{z_{\alpha,h}}(x_\alpha) \cap (\mathbb{R}^N \setminus \Omega)}, 
				\mathbf{1}_{L_{z_{\alpha,h}}(y_\alpha)\cap (\mathbb{R}^N \setminus \Omega)} 
				\to \mathbf{1}_{L_{z_0}(\widetilde{x})\cap (\mathbb{R}^N \setminus \Omega)} \quad\mbox{ a.e.},
			\end{cases}
		\]
		when $\alpha\to \infty,$  and $h \to 0.$ 
		Thus since $(x_{\alpha}, y_{\alpha}, t_{\alpha}, r_{\alpha})$ is the maximum point of
 		$G_{\alpha}(x,y,t,r)$ and $u,v$ are bounded continuous functions, by using the previous two limits and the dominated convergence theorem, it follows that
            \begin{equation}\label{eq:nueva2}
                \limsup_{\alpha\to\infty\atop\delta,h\to0}
			I^2_{(z_{\alpha,h}),\delta}(u,x_\alpha,t_\alpha)
			-I^2_{(z_{\alpha,h}),\delta}(v,y_\alpha,r_\alpha)\le 0,
            \end{equation}		
		and
		\begin{equation}\label{eq:nueva3}
                  \lim_{\alpha\to\infty\atop h\to0}	
			I^3_{z_{\alpha,h}}(u,x_\alpha,t_\alpha)
			-I^3_{z_{\alpha,h}}(v,y_\alpha,r_\alpha)
			=-\widetilde{M} \int_{L_{z_0}(\bar x)\cap (\mathbb{R}^N \setminus \Omega)} 
			\frac{d\tau}{|\tau|^{1+2s}}.  
                \end{equation}

		Therefore, 
		from \eqref{eq:nueva1}, \eqref{eq:nueva2}, and \eqref{eq:nueva3}, letting first 
		$\delta \to 0$, then $\alpha \to \infty $, 
		and $h \to 0,$ we get
		\[
			\begin{aligned}
				\limsup\limits_{\alpha\to\infty,\atop \delta,h\to 0}
				E_{(z_{\alpha,h}),\delta}(u, \phi_\alpha, x_\alpha,t_\alpha)&- 
				E_{(z_{\alpha,h}),\delta} 
				(v, \varphi_{\alpha}, y_\alpha,r_\alpha)	\\		
				&\le \displaystyle
						-\widetilde{M} \int_{L_{z_0}(\bar x)\cap (\mathbb{R}^N \setminus \Omega)}
						\frac{d\tau}{|\tau|^{1+2s}}.
			\end{aligned}
		\]
		Thus by \eqref{eq:estiinf}, we conclude
		\[
			0\le -\widetilde{M} 
			\int_{L_{z_0}(\bar x)
			\cap (\mathbb{R}^N \setminus \Omega)} \frac{dt}{|t|^{1+2s}}<0,
		\]
		which implies the desirable contradiction.
	\end{proof}
	
	With the previous result the classical Perron's method can be used so we obtain the next result. Since we assumed that \eqref{hip} holds, by the results obtained in \cite{DPQR_FC}, we know that there exists a unique function $
	 \overline{w}\in\mathcal{C}(\overline\Omega)$ that satisfies, 
	\[
		\begin{cases}
			\Lambda_1^s \overline{w}(x)=-\|\Lambda_1^su_0\|_{L^{\infty}(\Omega)} &\text{in }\Omega,\\
			\overline{w}(x)=g(x) &\text{in }
				\mathbb{R}^N\setminus\Omega,
		\end{cases}
	\]
	in the viscosity sense, with $\overline{w}|_{\partial\Omega}=g|_{\partial\Omega}.$ 
	Since by the comparison principle proved in \cite[Theorem 2.2]{DPQR_FC} it is clear that 
	$\overline{w}\geq u_0$ in $\mathbb{R}^{N}$ we get that $\overline{u}(x,t)\coloneqq\overline{w}(x)$ 
	is a viscosity supersolution of \eqref{ev_eq} where the datum is attained in 
	a classical continuous way. Reasoning similarly we also get that 
	$\underline u(x,t)\coloneqq\underline{w}(x)$ with
	$$
		\begin{cases}
			\Lambda_1^s \underline{w}(x)=\|\Lambda_1^su_0\|_{L^{\infty}(\Omega)} &\text{in }\Omega,\\
			\underline{w}(x)=g(x) &\text{in }
				\mathbb{R}^N\setminus\Omega,\\
		\end{cases}
	$$
	is a subsolution of \eqref{ev_eq} where the datum is also attained in a classical continuous way. That is 
	by assuming \eqref{hip} we obtain a well orderer viscosity sub and supersolution continuous up to $
	\overline{\Omega}$, of problem \eqref{ev_eq}.
	Then, Perron's method gives us the existence of a viscosity solution to \eqref{ev_eq}. Since the uniqueness follows from the comparison principle, Theorem \ref{teo-1.intro} is proved.


	\bigskip 
	
	Once we have proven the existence and uniqueness of viscosity solutions of the parabolic problem, by using 
	the existence of the first eigenvalue and eigenfunction of $-\Lambda_1^s$ (see Theorem \ref{autof}) and 
	following some ideas from \cite{BlancRossi}, we can establish the asymptotic behavior of the solutions 
	of 	\eqref{ev_eq}. For that let us consider 
	\begin{equation}\label{auto1}
		\begin{cases}
			-\Lambda_1^s \varphi_R(x)=\mu_{R}\varphi_R &\text{in }\Omega_{R},\quad\\
			 \varphi_R=0&\text{in } \mathbb{R}^{N}\setminus\Omega_{R},\\
				\end{cases}
	\end{equation}
	and
	\begin{equation}\label{autoN}
		\begin{cases}
			-\Lambda_N^s \psi_R(x)=\mu_{R}\psi_R &\text{in }\Omega_{R},\\
			 \psi_R=0&\text{in } \mathbb{R}^{N}\setminus\Omega_{R},\\
				\end{cases}
		\end{equation}
	where $\Omega\subset\subset\Omega_R$ with $\Omega_R$ is a strictly convex domain of 
	$\mathbb{R}^{N}$, $\mu_R>0$, $0 {<} \psi_R(x)=-\varphi_R(x)$, $x\in\Omega_R$.

	Then we have the following result.
	\begin{theorem}\label{asymp}
		Assume $s>\nicefrac12.$ If $u_0$ and $g$ satisfy \eqref{hip} 
		then there exist $C_i=C_i(u_0)>0$, $i=1,\, 2$ such that 
		\begin{equation}\label{c_uniforme_2}
			C_1e^{-\mu_Rt}\varphi_R(x)\leq u(x,t)-z(x)\leq C_2 e^{-\mu_Rt}\psi_R(x),
			\quad x\in\overline{\Omega},\quad t>0,
		\end{equation}
		where $u(x,t)$ is the unique viscosity solution of \eqref{ev_eq} 
		given by Theorem \ref{teo-1.intro} and $z$ is  {the $s$-convex envelope of $g$, that is,} the unique viscosity solution of 
		\begin{equation}\label{eliptico0}
			\begin{cases}
				-\Lambda_1^s z(x)=0&\text{in }\Omega,\\
				z=g&\text{in } \mathbb{R}^{N}\setminus\Omega.\\
			\end{cases}
		\end{equation}
	\end{theorem}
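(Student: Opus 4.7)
The plan is to build explicit barriers for $u-z$ out of the eigenfunctions on $\Omega_R$ and invoke the comparison principle from Theorem \ref{comparacion}. Concretely, for constants $C_1,C_2>0$ to be chosen, I set
$$
\bar u(x,t):=z(x)+C_2 e^{-\mu_R t}\psi_R(x), \qquad \underline u(x,t):=z(x)+C_1 e^{-\mu_R t}\varphi_R(x),
$$
and aim to show that $\bar u$ is a viscosity supersolution and $\underline u$ a viscosity subsolution of \eqref{ev_eq}. As a preliminary observation, the identity $\Lambda_1^s(-w)=-\Lambda_N^s w$, immediate from \eqref{operador} and \eqref{operador+}, together with \eqref{auto1}, forces $\psi_R:=-\varphi_R\geq 0$ to be an eigenfunction of $-\Lambda_N^s$ on $\Omega_R$ with the same eigenvalue $\mu_R$, so \eqref{autoN} is automatically solvable with the asserted relation $\psi_R=-\varphi_R$. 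Moreover, since $\overline\Omega\subset\subset\Omega_R$ and $\varphi_R<0$ in $\Omega_R$ by Lemma \ref{SMP}, both $\psi_R$ and $|\varphi_R|$ are bounded below by a positive constant on $\overline\Omega$.

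For the upper bound, the crux is the sub-additivity inequality
$$
\Lambda_1^s\bigl(z+\eta\psi_R\bigr)(x)\leq \Lambda_1^s z(x)+\eta\,\Lambda_N^s \psi_R(x),\qquad \eta\geq 0,
$$
which follows by fixing in the infimum defining $\Lambda_1^s(z+\eta\psi_R)$ a direction that (nearly) realizes the infimum for $\Lambda_1^s z$ and bounding the $\eta\psi_R$ contribution by $\Lambda_N^s\psi_R$. Combining this with $-\Lambda_1^s z=0$ and $-\Lambda_N^s\psi_R=\mu_R\psi_R$ and the computation $\bar u_t=-\mu_R C_2 e^{-\mu_R t}\psi_R$ produces $\bar u_t-\Lambda_1^s\bar u\geq 0$. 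On $\mathbb{R}^N\setminus\Omega$ one has $\bar u=g+C_2 e^{-\mu_R t}\psi_R\geq g$, and the initial ordering $\bar u(\cdot,0)\geq u_0$ on $\overline\Omega$ is secured by taking $C_2\geq \|u_0-z\|_{L^\infty(\overline\Omega)}/\min_{\overline\Omega}\psi_R$, which is possible by the previous paragraph. An application of Theorem \ref{comparacion} then yields $u\leq \bar u$.

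For the lower bound I argue symmetrically with $\underline u$, now using the always-valid super-additivity
$$
\Lambda_1^s\bigl(z+\eta\varphi_R\bigr)(x)\geq \Lambda_1^s z(x)+\eta\,\Lambda_1^s \varphi_R(x),\qquad \eta\geq 0,
$$
which is just $\inf(f+h)\geq \inf f+\inf h$. Together with $-\Lambda_1^s\varphi_R=\mu_R\varphi_R$ and $-\Lambda_1^s z=0$ this gives $\underline u_t-\Lambda_1^s \underline u\leq 0$. The boundary ordering $\underline u\leq g$ on $\mathbb{R}^N\setminus\Omega$ is automatic since $\varphi_R\leq 0$, and the initial ordering $\underline u(\cdot,0)\leq u_0$ on $\overline\Omega$ is obtained by choosing $C_1\geq \|u_0-z\|_{L^\infty(\overline\Omega)}/\min_{\overline\Omega}(-\varphi_R)$. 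Comparison again delivers $\underline u\leq u$, hence the two-sided estimate \eqref{c_uniforme_2}.

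The main obstacle is the rigorous handling of the nonlinear operator $\Lambda_1^s$ applied to the sums $z+\eta\psi_R$ and $z+\eta\varphi_R$ in the viscosity sense: the sub/super-additivity inequalities above are transparent for classical quantities, but must be translated into statements on touching test functions in order to interact correctly with the viscosity definitions, exploiting that $z$ solves $-\Lambda_1^s z=0$ and that $\varphi_R,\psi_R$ solve their eigenvalue problems only in the viscosity sense. The H\"older regularity of $z$ provided by Theorem \ref{regularidad.intro} and the continuity of $\varphi_R,\psi_R$ ensure that the usual viscosity formalism of \cite{DPQR_FC} applies and that the touching-point analysis can be carried out cleanly.
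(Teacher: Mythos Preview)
Your proposal is correct and follows essentially the same approach as the paper: both construct the barriers $\overline u=z+C_2e^{-\mu_R t}\psi_R$ and $\underline u=z+C_1e^{-\mu_R t}\varphi_R$, invoke the elementary inequalities $\inf(f+g)\ge\inf f+\inf g$ and $\inf(f+g)\le\inf f+\sup g$ to handle $\Lambda_1^s$ of the sum, use the eigenvalue equations and $-\Lambda_1^s z=0$, fix the constants to achieve the initial ordering, and conclude via Theorem~\ref{comparacion}. Your additional remarks (that $\psi_R=-\varphi_R$ is forced by $\Lambda_1^s(-w)=-\Lambda_N^s w$, and that $|\varphi_R|$ is bounded below on $\overline\Omega$ because $\overline\Omega\subset\subset\Omega_R$) and your caveat about carrying the sub/super-additivity through the viscosity formalism are points the paper leaves implicit, but they do not change the argument.
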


	\begin{proof}
		For every $x\in\overline{\Omega}$ and $t>0$ let us define, for some positive constants 
		that will be chosen later, the functions
		$$
			\underline u(x,t)\coloneqq z(x)+C_1e^{-\mu_Rt}\varphi_R(x)
			$$
			 and 
			 $$ 
			\overline u(x,t)\coloneqq z(x)+C_2e^{-\mu_Rt}\psi_R(x).
		$$
		On the first hand it is clear that 
		$\underline u(x,t) {\leq}g(x) {\leq}\overline u(x,t)$ in 
		$(\mathbb{R}^{N}\setminus\Omega)\times[0,+\infty)$.  
		By choosing the constants in an appropriate way, depending on the datum $u_0$, since
		$$
			\mbox{$0< \psi_R(x) =-\varphi_R(x)$, (so that $0>\varphi_R(x)$), $x\in{\Omega_R}$,}
			\text{ and }\Omega\subset\subset\Omega_R,
		$$
		it is also true that $\underline u(x,0)\leq u_0(x)\leq \overline u(x,0)$,  {$x\in\mathbb{R}^{N}$}. 
		On the other hand, since for every functions $f,\, g$ we have that,
		$$
			\inf_{x\in\mathbb{R}^N} f(x)+\inf_{x\in\mathbb{R}^N} g(x)\leq \inf_{x\in\mathbb{R}^N} (f(x)+g(x))\leq \inf_{x\in\mathbb{R}^N} f(x)
			+\sup_{x\in\mathbb{R}^N} g(x),
		$$
		then
		$$
			\Lambda_1^{s}\underline u(x,t)\geq \Lambda_1^{s}z(x)+C_1e^{-\mu_Rt}\Lambda_1^{s}\varphi_R(x)
		$$
		and
		$$
			\Lambda_1^{s}\overline u(x,t)\leq \Lambda_1^{s}z(x)+C_2e^{-\mu_Rt}\Lambda_N^{s}\psi_R(x),
		$$
		for every $x\in {\Omega}$ and $t>0$. Therefore, by \eqref{auto1},\eqref{autoN} and \eqref{eliptico0} 
		we obtain that $\underline{u}(x,t)$ (resp. $\overline{u}(x,t)$) is a viscosity subsolution 
		(resp. supersolution) of \eqref{ev_eq}. 
		Thus, the conclusion follows by the comparison principle proved in Theorem \ref{comparacion}.
	\end{proof}
	
		\begin{remark} {\rm
		If we would like to obtain the same bound as the one given in \eqref{c_uniforme_2} 
		replacing $\varphi_R$ (resp. $\psi_R$) by $\varphi_1$ (resp. $\psi_1$), 
		the first eigenfunction in $\Omega$, not in $\Omega_R$, of $\Lambda_1^s$ (resp. $\Lambda_N^s$),  in the general case, 
		we would need to assume that
		$C\varphi_1(x)\leq u_0(x)-z(x)\leq C\psi_1(x),\, x\in\overline{\Omega}$.
		 We notice that, taking into account \cite[Corollary 6.10]{biri} the bounds $\varphi_1(x)\leq -C\delta(x)^{s}$ (resp. $\psi_1(x)\geq C\delta(x)^{s}$) hold true.	
		Therefore, to obtain that 
		$C\varphi_1(x)\leq u_0(x)-z(x)\leq C\psi_1(x),\, x\in\overline{\Omega}$
		we need to assume $- c \delta(x)^s \leq u_0(x)-z(x) \leq c \delta (x)^s$
		that is an extra condition on the data.

		\medskip
		
		Whether we have \eqref{c_uniforme_2} with $(\varphi_R, \psi_R)$ or with 
		$(\varphi_1, \psi_1)$ the conclusion of Theorem \ref{asymp} 
		implies the uniform convergence of the solution of the parabolic problem \eqref{ev_eq} 
		to the $s$-convex envelope of the datum $g$,  {that is, the function} $z$. That is, \eqref{c_uniforme} that we presented in Theorem \ref{teo.conver.intro}.}
	\end{remark}

	In some particular cases we can improve the exponential bound obtained in the previous result. 
	Indeed we have the following result.
	{\color{black}
	\begin{proposition}\label{particular}
		 {Let be $s>1/2$}. If $g$ is an affine function in the whole $\mathbb{R}^N$ then 
		for every $\mu$ there exists $K=K(\mu)>0$
		such that 
		\begin{equation}\label{lonuestro}
			u(x,t)\leq z(x) + K e^{-\mu^s Ht},\quad x\in\overline{\Omega},\quad t>0.
		\end{equation}
		with $u(x,t)$ the unique viscosity solution of \eqref{ev_eq} given 
		by Theorem \ref{teo-1.intro}, $z$ is the unique viscosity solution of \eqref{eliptico0}
		and 
		\begin{equation}\label{laH}
			0<H\coloneqq \int_{\mathbb{R}}\dfrac{1-e^{-\tau^2}}{|\tau|^{1+2s}} d\tau<\infty.
		\end{equation}
		
	\end{proposition}
	}
	\begin{proof}
	Notice that since $g$ is an affine function {and $s>1/2$}, we have that $\Lambda_1^s g(x)=0$ for any $x\in\mathbb{R}^N.$
	        Then $v(x,t)=u(x,t)-g(x)$ is a solution of 
	        \[
                    \begin{cases}
			v_t(x,t)=\Lambda_1^s v(x,t) &\text{in }\Omega\times(0,T),\\
			v(x,t)=0 &\text{in }
				(\mathbb{R}^N\setminus\Omega)\times\textcolor{black}{[}0,T),\\
			v(x,0)=u_0(x)-g(x) &\text{in }\textcolor{black}{\Omega}.
	    	    \end{cases}	        
	        \]
		Thus we could assume that $g\equiv0$ that in particular implies $z\equiv 0$. 
		Let us now consider that $\Omega\subset\subset B_{R}\coloneqq B_R(0)$, for some $R>0$, 
		and 
		\[
			0< w(x,t)\coloneqq C e^{\mu^s(R^2-Ht)}e^{-\mu|x|^2},\quad x\in \mathbb{R}^N,\, 
			t>0,\, C>0,
		\]
		where $H$ is given by \eqref{laH} and $\mu>0$ is a fixed but arbitrary parameter. 
		If for any $x\in\mathbb{R}^N,$ we choose $\theta_{x}\in\mathbb{S}^{N-1}$
		such that $\langle x, \theta_{x} \rangle = 0$ (if $x=0$
		we just choose any $\theta_{x} \in\mathbb{S}^{N-1}$), we have that
		\[
		\begin{array}{l}
		\displaystyle 
		    \Lambda_1^s w(x,t) 
	 \leq \int_{\mathbb{R}}\dfrac{w(x+\tau\theta_{x},t)-w(x,t)}{|\tau|^{1+2s}}d\tau 
		 = w(x,t) \int_{\mathbb{R}}\dfrac{e^{- \mu |\tau|^2}- 1}{|\tau|^{1+2s}}d\tau\\[10pt]
		 \displaystyle 
		  = - \mu^{s}H w(x,t)= w_t (x,t),
		\end{array}
		\]
		for every $\mu>0$.
		Moreover, since $\Omega\subset\subset B_R$, by choosing $C \sim e^{\mu R^2}$ big enough, we also obtain 
		$$
			w(x,0)=C e^{ \mu^s R^2-\mu|x|^2}\geq u_0(x),\quad x\in\textcolor{black}{\Omega.}
		$$  
		That is, $w(x,t)$ is a supersolution of \eqref{ev_eq} so by Theorem \ref{comparacion}, by taking, for instance, $K=C e^{ \mu^s R^2}>0$, the conclusion follows.
	\end{proof}

	In the special case that $g\equiv 0$ and $u_0$ compactly supported 
	we have that the lower exponential rate of Theorem \ref{asymp} is given by the first eigenvalue in $\Omega$.
	Notice that in this case 
	$z \equiv 0$ is the unique viscosity solution of \eqref{eliptico0}.
	
	\begin{proposition}\label{particular.fin}
	    Let $s>\nicefrac12.$
		If $g\equiv 0$ and $u_0$ is compactly supported inside
		$\Omega$ then,  for every $\mu>0$ there
		exists a constant $K=K(\mu)$ such that
		\[
		- C e^{-\mu_1 t}	\leq u(x,t)\leq  K e^{-\mu^s Ht},\quad x\in\overline{\Omega},
		\quad t>0.
		\]
		with $u(x,t)$ the unique viscosity solution of \eqref{ev_eq} given 
		by Theorem \ref{teo-1.intro}, $H$ is the universal constant defined in \eqref{laH} and $\mu_1>0$ is the first eigenvalue given in Theorem \ref{autof}. 
	\end{proposition}

	\begin{proof}
	    In the special case that $u_0$ is compactly supported we have that
		$$u_0 (x) \geq C \varphi_1(x), \qquad x \in \textcolor{black}{\Omega}, $$
		for a large constant $C$ (recall that the first eigenfunction is
		strictly negative in $\Omega$). Therefore, from a comparison argument, we get
		$u(x,t)\geq C \varphi_1(x)e^{-\mu_1 t}.$
		This bound together with \eqref{lonuestro} 
		give the result. 
	\end{proof}

	\bigskip
	
	\begin{remark}
	{\rm {\color{black} Notice that the two previous results imply that we have an 
	upper bound for $u(x,t) - z(x)$ that goes to zero as $t\to \infty$ 
	exponentially with an exponent as large as we want (by taking $\mu$ large).}
	
	Of course even when $g$ is an affine function, the upper bound obtained in Proposition \ref{particular} cannot be expected as a lower bound in general. That is, we could not expect, in general, that the solution of the evolution problem {\color{black}
	lies {above} the $s$-convex envelope plus a large exponential 
	in $\overline{\Omega}$. Indeed, for instance $u(x,t)=e^{-\mu_1 t}\varphi_1(x)<0\equiv z(x)$, where $(\mu_1,\varphi_1)$ were given in Theorem \ref{autof}, is a solution of \eqref{ev_eq} that it is decreasing at a fixed exponential speed.  } }
	\end{remark}

	 Moreover, when the datum is not an affine function we 
	have the following result.

	\begin{proposition}
		For every $x_0\in\Omega$ there exist $K_i>0$, $i=1,\,2$, 
		$u_0\in{\mathcal{C}(\mathbb{R}^N)}$ 
		and $g$ satisfy 
		\eqref{hip} such that  $u(x,t),$ the unique viscosity solution of \eqref{ev_eq}, satisfies
		$$
			u(x_0,t)\geq z(x_0)+K_1e^{-K_2t},\quad t>0,
		$$	
		with $z$ the unique viscosity solution of \eqref{eliptico0}.
	\end{proposition}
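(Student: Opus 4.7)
We reduce to the case $g\equiv 0$, so that $z\equiv 0$: a general instance reduces to this via $\tilde u=u-z$ when $g$ is taken constant, using that $\Lambda_1^s$ annihilates constants. The task becomes: find $u_0\in\mathcal{C}(\mathbb{R}^{N})$ satisfying \eqref{hip}, with $u_0\geq 0$, $u_0\equiv 0$ outside a compact subset of $\Omega$ and $u_0(x_0)>0$, for which the solution $u$ of \eqref{ev_eq} satisfies $u(x_0,t)\geq K_1 e^{-K_2 t}$ with $K_1,K_2>0$. A natural candidate is $u_0:=-\varphi_{\Omega_0}$, minus the first (negative) eigenfunction produced by Theorem \ref{autof} on a strictly convex subdomain $\Omega_0\subset\subset\Omega$ containing $x_0$. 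Comparison (Theorem \ref{comparacion}) with the trivial subsolution $v\equiv 0$ yields $u\geq 0$, and it remains to upgrade this to the exponential lower bound at $x_0$.

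The strategy is to produce a positive viscosity subsolution of \eqref{ev_eq} of separated-variables form
\[
\underline v(x,t) := c\,e^{-\beta t}\eta(x),
\]
with $\eta\geq 0$ continuous, $\eta(x_0)>0$, $\eta\equiv 0$ outside $\Omega$, and $c\eta\leq u_0$. Then Theorem \ref{comparacion} gives $u\geq\underline v$, whence $u(x_0,t)\geq c\,\eta(x_0)\,e^{-\beta t}=K_1 e^{-K_2 t}$. The subsolution requirement $\underline v_t\leq\Lambda_1^s\underline v$ reduces to $-\Lambda_1^s\eta\leq\beta\eta$ in the viscosity sense in $\Omega$. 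A useful simplification is that at any point where $\eta$ vanishes, this condition is automatic: any test function $\Phi$ touching $\eta$ from above must be $\geq 0$ in a neighborhood of the touching point, so the integrand defining $\Lambda_1^s\bar\eta$ is nonnegative and the inequality holds without further work. Thus the condition only needs to be checked on $\{\eta>0\}$.

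\noindent\emph{The main obstacle is precisely this interior verification.} The most natural choice $\eta:=-\varphi_{\Omega_0}$ does not work directly. Using the identity $\Lambda_1^s(-w)=-\Lambda_N^s w$, the condition on $\Omega_0$ becomes $\Lambda_N^s\varphi_{\Omega_0}\leq\beta|\varphi_{\Omega_0}|$, but the eigenfunction equation only yields $\Lambda_1^s\varphi_{\Omega_0}=\mu_{\Omega_0}|\varphi_{\Omega_0}|$, and the relation $\Lambda_N^s\geq\Lambda_1^s$ points the wrong way; the same duality in fact shows that $-\varphi_{\Omega_0}$ is an eigenfunction of $-\Lambda_N^s$ with the same eigenvalue $\mu_{\Omega_0}$, so that $c\,e^{-\mu_{\Omega_0}t}(-\varphi_{\Omega_0})$ is a \emph{supersolution} of our equation rather than a subsolution. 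The remedy we anticipate is either (a) replacing $-\varphi_{\Omega_0}$ by a custom-made smooth bump $\eta$ in $\Omega$ for which $-\Lambda_1^s\eta\leq\beta\eta$ can be verified on $\{\eta>0\}$, exploiting the greater flexibility of the infimum over directions (as opposed to individual one-dimensional fractional Laplacians) and the H\"older regularity from Theorem \ref{regularidad.intro}; or (b) an iteration scheme: by positive $1$-homogeneity of $\Lambda_1^s$ and a strong-maximum/propagation of positivity, one obtains $T_0>0$ and $c\in(0,1)$ with $u(\cdot,T_0)\geq c\,u_0$, and the semigroup property together with homogeneity then give $u(\cdot,nT_0)\geq c^n u_0$, whence interpolating in $t$ yields $u(x_0,t)\geq u_0(x_0)\exp\!\left(t\,\tfrac{\log c}{T_0}\right)$, which is the desired exponential bound.
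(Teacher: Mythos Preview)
Your reduction to $g\equiv 0$ is where the argument goes wrong strategically. The statement lets you \emph{choose} both $u_0$ and $g$, and the freedom in $g$ is exactly what the paper exploits. Once you set $g\equiv 0$ you are forced to look for a subsolution $\underline v(x,t)=c\,e^{-\beta t}\eta(x)$ with $\eta\ge 0$ and $\eta\equiv 0$ outside $\Omega$, and as you correctly observe, the inequality $-\Lambda_1^s\eta\le\beta\eta$ must hold on $\{\eta>0\}$. The difficulty is real: for any reasonable nonnegative $\eta$ supported in $\Omega$, near $\partial\{\eta>0\}$ the ratio $(-\Lambda_1^s\eta)/\eta$ tends to blow up (compare Lemma~\ref{barrera_2}, where $-\mathcal{I}_\theta(w_\gamma)\gtrsim d^{\gamma-2s}\gg d^\gamma=w_\gamma$ as $d\to 0$). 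Neither of your remedies closes this gap: (a) is only a hope, with no concrete $\eta$ proposed that survives the boundary blow-up; (b) requires a parabolic strong maximum principle or positivity propagation ($u(\cdot,T_0)\ge c\,u_0$ on $\mathrm{supp}\,u_0$) that is nowhere established in the paper, and the elliptic Lemma~\ref{SMP} does not supply it.

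The paper's idea is to drop the constraint $g\equiv 0$ and use a \emph{one-dimensional} eigenpair. Taking coordinates so that $x_0=0$ and writing $x=(x_1,x')$, one uses the positive first eigenfunction $\varphi_1^1$ of $(-\Delta)^s$ on $(-r,r)\subset\mathbb{R}$ (from \cite{SV}), extended to $\mathbb{R}^N$ as a function of $x_1$ alone. A direct change of variables gives $\mathcal{I}_\theta(\varphi_1^1)(x)=|\theta_1|^{2s}\,\mathcal{I}_{e_1}(\varphi_1^1)(x)=-|\theta_1|^{2s}\mu_1^1\varphi_1^1(x_1)$ for $|x_1|<r$, so the infimum over $\theta$ is attained at $\theta=e_1$ and $v(x,t)=e^{-\mu_1^1 t}\varphi_1^1(x_1)$ is an \emph{exact} solution of $v_t=\Lambda_1^s v$ on the strip $\{|x_1|<r\}\cap\Omega$. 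This function is positive on the whole slab $\{|x_1|<r\}$, including outside $\Omega$, which is why one \emph{must} take $g(x_1,x')\ge\varphi_1^1(x_1)$ there; the additional requirement $g(x_1,0)=0$ on the $x_1$-axis outside $\Omega$ then forces $z(0)=0$ via the geometric characterization of the $s$-convex envelope. Comparison on the strip (using $u\ge 0$ from $g,u_0\ge 0$) finishes the proof. The moral: the nontrivial exterior datum $g$ is not a complication to be reduced away but the device that makes a clean separated-variables subsolution possible.
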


	\begin{proof}
	    Let us assume, without loss of generality that $x_0\in\Omega$ is the origin and  let us consider 
	    \begin{equation}\label{diff}
		B_r\coloneqq B_r(0)\subset\subset\Omega.
	    \end{equation} 
	    In what follows we  write $x=(x_1, x')\in\mathbb{R}^{N}$ and we define 
	    \[
	        B_r^{1}\coloneqq B_r\cap\{x'=0\}=[-r,r]\subseteq\mathbb{R}.
	    \]
		Therefore there exist (see \cite{SV}) $\mu_1^{1}>0$ a positive eigenvalue and $\varphi_1^{1}$ a 
		positive eigenfunction that is $\mathcal{C}^{s}$ up to the boundary such that
		$$	
			\begin{cases}
				-\Lambda_1^s\varphi_1^{1}(x)
				\left(=(-\Delta)^s_1 \varphi_1^{1}(x)\right)=\mu_{1}^1\varphi^{1}_1 &\text{in }[-r,r],\quad\\
					 \varphi_1^{1}=0&\text{in }(-\infty,-r)\cup(r,\infty).\\
			\end{cases}
		$$
		Since, by \eqref{diff}, if $(x_1,0)\in B_r$ clearly 
		$(x_1,0)\notin (\mathbb{R}^N\setminus\Omega)\cap\{x'=0\}$ 
		we can consider a nonnegative function 
		$g\in \mathcal{C}(\overline{\mathbb{R}^N\setminus\Omega})\cap 
		L^\infty(\overline{\mathbb{R} ^N\setminus\Omega})$ such that
		\begin{equation}\label{sobre_gg}
			g(x_1,x')\geq \varphi_1^{1}(x_1),\quad \{|x_1|\leq r,\, (x_1,x')\notin\Omega\}
		\end{equation}
		\begin{equation}\label{sobre_g}
			g(x_1,0)=0,\quad (x_1,0)\in (\mathbb{R}^{N}\setminus\Omega)\cap\{x'=0\},
		\end{equation}
		as a datum of \eqref{eliptico0}. On one hand, by the comparison principle 
		(see \cite[Theorem 2.2]{DPQR_FC}) it is follows that $z\geq 0$. 
		On the other hand, taking into account the geometric interpretation 
		of the $s$-convex envelope (see \cite[Theorem 1.1]{DPQR_FC}) $z\leq z_{D}$ 
		with $z_D$ the convex envelope of $g$ in $\Omega\cap\{x'=0\}$. Since, 
		by \eqref{sobre_g}, $z_D\equiv 0$ it follows that, in particular,
		\begin{equation}\label{zetaen0}
			z(0)=0. 
		\end{equation}

		Let us now take $u_0\geq 0$ such that 
		$\Lambda_1^su_0(x)\in L^{\infty}(\Omega)$ and
		\begin{equation}\label{sobre_u0}
			u_0(x_1,x')\geq \varphi^1_1(x_1),\quad |x_1|\leq r. 
		\end{equation}
		Thus, by \eqref{sobre_gg}, \eqref{sobre_u0} and the fact that 
		$\varphi_1^1(x_1)=0$ if $|x_1|>r$, the function 
		$$
			v(x_1, x', t)\coloneqq\varphi_1^1(x_1)e^{-\mu_1^1t}\left(\leq \varphi_1^1(x_1)\right),
			\quad t>0,$$ satisfies
		$$
		\begin{cases}
					v_t(x,t)=\Lambda_1^s v(x,t) &\text{in }\widetilde{\Omega}_r\times(0,\infty),\\
					{v(x_1,x',t)\leq g(x)} &\text{in }
						(\mathbb{R}^N\setminus\widetilde{\Omega}_r)\times(0,\infty),\\
					v(x_1,x',0)\leq u_0(x) &\text{in }\mathbb{R}^N,
				\end{cases}
		$$
		where
		$$
		    \widetilde{\Omega}_r\coloneqq\{ (x_1,x')\in\Omega\colon |x_1|\leq r\}.
		   $$
		Therefore by Theorem \ref{comparacion},
		we get that $u(x_1,x',t)\geq v(x_1, x', t)$ so, 
		by \eqref{zetaen0}, we conclude
		$$u(0,t)\geq z(0)+\varphi_1^{1}(0)e^{-\mu_1^1t},\quad t>0,$$
		as wanted.
\end{proof}

\section*{Acknowledgements}
BB was partially supported by AEI MTM2016-80474-P grant and Ram\'on y Cajal fellowship RYC2018-026098-I (Spain).

    LMDP and JDR are partially supported by
    Agencia Nacional de Promoción de la Investigación, el Desarrollo
    Tecnológico y la Innovación PICT-2018-3183, and
    PICT-2019-00985 and UBACYT 20020190100367 (Argentina).

LMDP was also partially
    supported by the European Union’s Horizon 2020 research and innovation program
    under the Marie Sklodowska-Curie grant agreement No 777822,

\end{document}